\pgfplotsset{compat=1.15}
\DeclareMathOperator{\eco}{econv}	
\DeclareMathOperator{\conv}{conv}
\DeclareMathOperator{\cl}{cl}
\DeclareMathOperator{\epco}{\normalfont{e}^\prime conv} 	
\DeclareMathOperator{\ep}{\text{e}^\prime} 	
\DeclareMathOperator{\dom}{dom}					
\DeclareMathOperator{\epi}{epi}
\newcommand{\Ramp}{\overline{\mathbb{R}}}
\newcommand{\R}{\mathbb{R}}
\newcommand{\RTD}{\mathbb{R}^{(T)}}
\newcommand{\ci}{\left\langle}		
\newcommand{\cd}{\right\rangle}
\newcommand{\Db}{\overline{D}}
\theoremstyle{plain}
\newtheorem{theorem}{Theorem}[section]
\newtheorem{lemma}[theorem]{Lemma}
\newtheorem{corollary}[theorem]{Corollary}
\newtheorem{proposition}[theorem]{Proposition}
\theoremstyle{definition}
\newtheorem{definition}[theorem]{Definition}
\newtheorem{example}[theorem]{Example}
\theoremstyle{remark}
\newtheorem{remark}{Remark}
\begin{document}

\title{Lagrange duality on DC evenly convex optimization problems via a generalized conjugation scheme}

\author{Maria Dolores Fajardo\thanks{Department of Mathematics, University of Alicante, Alicante, Spain, md.fajardo@ua.es} \and
Jos\'e Vidal\thanks{Department of Physics and Mathematics, University of Alcalá, Alcalá de Henares, Spain, j.vidal@uah.es}}

\maketitle

\begin{abstract}
In this paper we study how Lagrange duality is connected to optimization problems whose objective function is the difference of two convex functions, briefly called DC problems. We present two Lagrange dual problems, each of them obtained via a different approach. While one of the duals corresponds to the standard formulation of the Lagrange dual problem, the other is written in terms of conjugate functions. When one of the involved functions in the objective is evenly convex, both problems are equivalent, but this relation is no longer true in the general setting. For this reason, we study conditions ensuring not only weak, but also zero duality and strong duality between the primal and one of the dual problems written using conjugate functions. For the other dual, and due to the fact that weak duality holds by construction, we just develop conditions for zero duality gap and strong duality between the primal DC problem and its (standard) Lagrange dual problem.
Finally, we characterize weak and strong duality together with zero duality gap between the primal problem and its Fenchel-Lagrange dual following techniques used throughout the manuscript.
\end{abstract}

\textbf{MSC Subject Classification.} 52A20, 26B25, 90C25, 49N15.\\

\textbf{Keywords:~}{
Generalized convex conjugation, evenly convex function, DC problem, Fenchel duality, Lagrange duality, weak duality, strong duality, zero duality gap
}

\section{Introduction}
\label{sec:Intro}
Conjugate duality is a branch within the area of optimization that has attained a lot of attention in mathematics. One of the reasons of its success is that, given an optimization problem of interest, called \textit{primal} problem, it makes it possible to analyze an alternative problem, called \textit{dual}, to solve the pursued problem. To get a deep introduction about basic tools and a complete theoretical perspective, we refer the reader, for instance, to the textbooks \cite{BC2011,B2010,R1970,Z2002} among many others. As recent applications of duality theory, we mention that this approach has been used in \cite{HHKSV19} in the context of inverse problems, \cite{BHSTV21} in optimization problems posed on Riemannian manifolds, \cite{ML2005} in the context of economics or \cite{CP2011} in the development of the well-known primal-dual algorithm call Chambolle-Pock.

In this paper, we focus our attention both in a particular type of duality, which is Lagrange duality, and in a distinctive type of optimization problems, called DC problems. For a throughout introduction on them, we refer the reader to \cite{HT1999, TA1997,TD2018,Oli2020,TD2023}. These problems allow to mathematically model optimization problems belonging to areas like, for instance, data visualization \cite{CGR2018}, and they are also of interest from the algorithmic point of view, as one can appreciate from \cite{ACV2022} or \cite{BFSS2024}. Regarding Lagrange duality, it has already been explored not only in the classical setting, using Fenchel conjugate teory, by \cite{JDL2004,BW2006A,BGW2008TL}, but also using a more general conjugation scheme, the $c$-conjugation, based on generalized convex conjugation in \cite{FVR2016} and \cite{FV2016SSD}, where the authors develop sufficient conditions ensuring strong duality between a convex primal problem and its Lagrange dual. To get an entire perspective using this generalized conjugation scheme into duality theory, see \cite{FGRVP2020}. To make profit of this conjugation scheme, the involved functions must be evenly convex. Following \cite{RVP2011}, a function is evenly convex if its epigraph is an evenly convex set, a set which can be expressed as the intersection of an arbitrary family of open half-spaces. These sets were introduced by Fenchel in \cite{F1952} to extend polarity theory to non-closed and convex sets.

In the novel work \cite{FV2023}, two Fenchel dual problems for a DC optimization primal one were built  by means of $c$-conjugation, being both equivalent under even convexity assumption of one of the functions in the primal problem. Characterizations for weak, strong and stable strong duality were presented. That work has motivated us to continue with the study of Lagrange duality, making use of the Lagrange dual problem developed in \cite{FVR2016} for a convex optimization primal one and the way that, again under even convexity assumptions, this dual problem can be reformulated in an equivalent form expressed in terms of the $c$-conjugates of the involved functions in the primal problem. This tecnique, applied to Fenchel conjugation, is used in \cite{FLLY2013}. As it happened with Fenchel duality, weak duality is not guaranteed with this reformulated dual problem if the even convexity assumption does not hold. This issue serves as a starting point to develop conditions ensuring not only weak and strong duality, but also zero duality gap as well for this primal-dual pair. 

On the other hand and, since in \cite{FVR2016} there is no analysis about zero duality gap between the primal problem and its standard Lagrange dual, this paper recaps this situation providing necessary and sufficient conditions for both strong duality and zero duality gap. As it will be shown, these conditions are written using the epigraphs of the $c$-conjugate functions of the involved functions in the primal problem, so they belong to the class of closedness-type regularity conditions. Additionally, we also compare strong dualities of both primal-dual pairs using the theoretical results presented throughout the paper. Last but not least, we study conditions for weak, strong and zero duality gap between the DC primal problem and its Fenchel-Lagrange dual. As it happens in \cite{FV2023} for Fenchel and in the sequel for Lagrange duals, it is also possible to derive an equivalent formulation of the Fenchel-Lagrange dual problem presented in \cite{FV2017}. Under the same even convexity assumption, both dual problems turn out to be equivalent and we characterize weak and strong dualities together with zero duality gap between the new Fenchel-Lagrange dual and the primal problem.

The structure of the paper is as follows. In Section \ref{sec:Pre} we state the necessary results to keep the paper self-contained. Section \ref{sec:Dual_problems} presents two Lagrange dual problems that will be the focus of the work. As it will be shown, both are equivalent under the even convexity of just one of the functions in the objective. Section \ref{sec:Cond_WD} contains conditions for weak and strong duality and zero duality gap between the primal and the Lagrange dual problem which is expressed in terms of conjugate functions. Section \ref{sec:Comp_DL1} characterizes both strong duality and zero duality gap between the primal and its standard Lagrange dual problem, and it also compares strong dualities of both primal-dual pairs derived in Section \ref{sec:Dual_problems}. Section \ref{sec:FL_duality} is devoted to the analysis of an alternative Fenchel-Lagrange dual problem which turns out to be equivalent to the (standard) Fenchel-Lagrange dual under appropiate assumptions. Finally, Section \ref{sec:Conclusions} emphasizes the achieved goals in the paper and concludes the manuscript.

\section{Preliminaries}
\label{sec:Pre}
This section contains the necessary results and concepts that will be used in the sequel. Let $X$ be a non-trivial separated locally convex space, lcs in short, whose topology is the one induced by its continuous dual space $X^*$, i.e., $\sigma(X,X^*)$. We represent by $\ci x,x^*\cd$ the action of $x^*$ at $x$. For a set $D$ in $X$, we denote by $\conv(D)$ and $\cl(D)$ its convex hull and closure, respectively. Throughout we will use the notation $\Ramp:=\R\cup\left\{\pm\infty\right\}$, $\R_+=[0,+\infty[$ and $\R_{++}=]0,+\infty[$.

As indicated in Section \ref{sec:Intro}, evenly convex sets, briefly e-convex, were introduced in \cite{F1952}, studied in terms of their sections and projections in \cite{KMZ2007} and characterized in \cite{DML2002} as follows. Actually this characterization is the way that e-convex sets are defined in \cite{DML2002}, more tratable than the original one.

\begin{definition}[Def.~1, \cite{DML2002}]
\label{def:Econvex}
A set $C\subseteq X$ is e-convex if for every point $x_0\notin C$, there exists $x^*\in X^*$ such that $\ci x-x_0,x^*\cd<0$, for all $x\in C$.
\end{definition}

If $D$ is a set in $X$, its \textit{convex hull}, denoted by $\eco D$, 
is the smallest e-convex set that contains $D$. If $D$ is, moreover, convex, then
\begin{equation*}
	D\subseteq \eco D\subseteq \cl D.
\end{equation*}
The e-convex hull operator is closed under arbitrary intersections and due to Hahn-Banach theorem, every closed or open convex set is e-convex too, being the converse statement false in general; see \cite[Ex.~2.2]{FV2023}.

Given a function $f:X\to\Ramp$, we keep the standard notation for its domain and epigraph, that is,
\begin{align*}
	\dom f&:=\left\{ x\in X~:~f(x)<+\infty\right\},\\
	\epi f&:=\left\{(x,\alpha)\in X\times\R~:~f(x)\leq\alpha\right\},
\end{align*}
and it is \textit{proper} if $f(x)> -\infty$, for all $x \in X$, and its domain is non-empty. Its \textit{lower semicontinuous convex hull}, denoted by $\cl f$, is the function verifying $\epi\cl f=\cl(\epi f)$, and $f$ is called \textit{lower semicontinuous}, lsc, in breaf, if $f=\cl f$.
In a bit different way, the \textit{e-convex hull} of a proper function $f$, briefly denoted by $\eco f$, is its largest e-convex minorant. Let us observed that it cannot be defined as that function verifying $\epi\eco f=\eco(\epi f)$.

\begin{example}
Let us consider the function $f:\R \rightarrow \Ramp$
\begin{equation*}
	f(x)=\left\{
	\begin{aligned}
		x,~&~\text{if } x > 0,\\
		1,~&~\text{if } x = 0,\\
		+\infty, ~&~ \text{otherwise.}
	\end{aligned}
	\right.
\end{equation*}
Then $\eco(\epi f)=\left\lbrace(x,x): x\geq 0 \right\rbrace\setminus \left\lbrace(0,0)\right\rbrace$, and there is no function whose epigraph is this e-convex set; see Figure \ref{fig:Example_econv}. 
\begin{figure}
\definecolor{zzttqq}{rgb}{0.6,0.2,0.}
\definecolor{ttqqqq}{rgb}{0.2,0.,0.}
\begin{tikzpicture}[line cap=round,line join=round,>=triangle 45,x=1.0cm,y=1.0cm]
\clip(-6.5,0) rectangle (4,6);
\fill[line width=2.pt,color=zzttqq,fill=zzttqq,fill opacity=0.10000000149011612] (-1.,1.) -- (-1.,4.683792839178872) -- (2.6413367437101147,4.683792839178872) -- (2.6413367437101147,1.) -- cycle;
\draw [->,line width=1.pt] (-1.,1.) -- (-1.,5.);
\draw [->,line width=1.pt] (-1.,1.) -- (3.,1.);
\draw (-0.3090098537597738,3.4613384720997162) node[anchor=north west] {$econv(epi f)$};
\begin{scriptsize}
\draw [color=ttqqqq] (-1.,1.) circle (3.0pt);
\end{scriptsize}
\end{tikzpicture}
\caption{Set from Example 2.2}
\label{fig:Example_econv}
\end{figure}
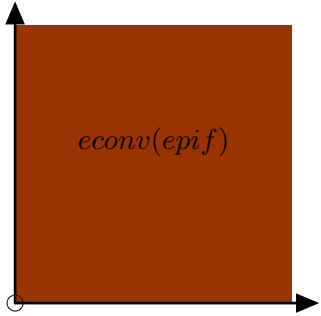
\end{example}
 The class of lsc functions is a strictly contained subclass in the class of e-convex functions, see \cite[Ex.~2.1]{FV2017}, which is an expected resulted, knowing the relationship between closed convex sets and e-convex sets.\\
 
In the rest of this preliminary section, we state the conjugation scheme that we will use throughout the paper, which is called the $c$-conjugation scheme; see \cite{MLVP2011}. It is based on the generalized convex conjugation theory presented by Moreau in \cite{Mor1970} and it uses a particular pair of coupling functions. Let $f:X\to\Ramp$ be a proper function and set $W:=X^*\times X^*\times \R$. The coupling function used in \emph{$c$-conjugacy} is $c:X\times W\to\Ramp$
\begin{equation*}
	c(x,(x^*,y^*,\alpha))=\left\{
	\begin{aligned}
		\ci x,x^*\cd, &~~~ \text{if } \ci x,y^*\cd<\alpha\\
		+\infty, & ~~~ \text{otherwise.}
	\end{aligned}
	\right.
\end{equation*}
The \emph{$c$-conjugate} of a function $f:X\to\Ramp$ is defined as
\begin{equation*}
	f^c(x^*,y^*,\alpha):=\sup_{X}\left\{ c(x,(x^*,y^*,\alpha))-f(x)\right\}.
\end{equation*}
Observe that $c$-conjugation provides different results than standard Fenchel conjugation, see \cite[Ex.~2.5]{FV2016SSD}. In fact, 
\begin{equation*}
	f^c(x^*,y^*,\alpha)= \left \{
	\begin{aligned}
		f^*(x^*), &~~~ \text{if } \dom f \subseteq H_{y^*,\alpha}^{-},\\
		+\infty, & ~~~ \text{otherwise,}
	\end{aligned}
	\right.
\end{equation*}
where $f^*:X^* \to \Ramp$ is the Fenchel conjugate of $f$ and $H_{y^*,\alpha}^{-}$ is the open half-space in $X$ defined as
\begin{equation*}
	H_{y^*,\alpha}^{-}:=\{x\in X : \ci x,y^*\cd < \alpha \}.
\end{equation*}
This conjugation scheme is complemented by the use of the coupling function $c^\prime:W\times X\to\Ramp$ defined as
\begin{equation*}
	c^\prime((x^*,y^*,\alpha),x):=c(x,(x^*,y^*,\alpha)),
\end{equation*}
which allows the following definition of the \emph{$c^\prime$-conjugate} of a proper function $h:W\to\Ramp$ as
\begin{equation*}
	h^{c^\prime}(x):=\sup_{W}\left\{c^{\prime}((x^*,y^*,\alpha),x)-h(x^*,y^*,\alpha)\right\}.
\end{equation*}
This conjugation pattern gives priority to $-\infty$, i.e., the sign convention is 
\begin{equation*}
	(+\infty)+(-\infty)=(-\infty)+(+\infty)=(+\infty)-(+\infty)=(-\infty)-(-\infty)=-\infty.
\end{equation*}
Based on the generalized convex duality theory, see \cite{Mor1970}, functions $c(\cdot,(x^*,y^*,\alpha))-\beta :X \rightarrow \Ramp$, with $(x^*,y^*,\alpha) \in W $ and $\beta \in \R$, are called \emph{c-elementary}, and functions $c^{\prime}(\cdot,x)-\beta : W \rightarrow \Ramp$, with $x\in X$ and $\beta \in \mathbb{R}$, are called \emph{c}$^{\prime }$-\emph{elementary}. In \cite{MLVP2011}, it is shown that any proper e-convex function $f: X \rightarrow \Ramp$ is the pointwise supremum of a set of $c$-elementary functions. Extending this concept, \cite{FVR2012} introduced the \emph{$\ep$-convex functions} as convex functions $g:W \rightarrow \Ramp$ which can be expressed as the pointwise supremum of a set of $c^\prime$-elementary functions. The \emph{$\ep$-convex hull} of any function $g:W\rightarrow\Ramp$, $\epco g$, is the largest $\ep$-convex minorant function  of $g$. The following theorem from \cite{ML2005} is the counterpart of Fenchel-Moreau theorem for e-convex and $\ep$-convex functions.
 
\begin{theorem}[Prop. 6.1, 6.2, Cor. 6.1,~\cite{ML2005}]
\label{thm:Theorem_charac}
Let $f:X\rightarrow \overline{\mathbb{R}}$ and $g:W\rightarrow \overline{\mathbb{R}}$. Then
\begin{itemize}
\item[(i)] $f^{c}$ is e$^{\prime }$-convex$;$ $g^{c^{\prime }}$ is e-convex.

\item[(ii)] If $f$ has a proper e-convex minorant, then $\eco f=f^{cc^{\prime }}$; $ \epco g=g^{c^{\prime }c}$.

\item[(iii)] If $f$ does not take on the value $-\infty $, then $f$ is e-convex if and only if $f=f^{cc^{\prime }};g$ is e$^{\prime }$-convex if and only if $g=g^{c^{\prime }c}$.

\item[(iv)] $f^{cc^{\prime }}\leq f;$ $g^{c^{\prime }c}\leq g$. 
\end{itemize}
\end{theorem}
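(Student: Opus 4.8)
The plan is to derive the four items as instances of Moreau's abstract generalized conjugation theory \cite{Mor1970}, applied to the coupling function $c$ and to its transpose $c'$, once its notions are translated into the present vocabulary by means of two facts already recalled in this section: a proper function on $X$ is e-convex if and only if it is a pointwise supremum of $c$-elementary functions (\cite{MLVP2011}), and, by the very definition borrowed from \cite{FVR2012}, a function on $W$ is $\ep$-convex exactly when it is a pointwise supremum of $c'$-elementary functions (such a supremum being automatically convex). Throughout one must respect the asymmetric convention $(+\infty)+(-\infty)=-\infty$ and remember that $c$, hence $c'$, never takes the value $-\infty$.

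For item (i) I would merely rewrite $f^{c}(x^*,y^*,\alpha)=\sup_{x\in X}\{c'((x^*,y^*,\alpha),x)-f(x)\}$ and observe that whenever $f(x)\in\R$ the map $(x^*,y^*,\alpha)\mapsto c'((x^*,y^*,\alpha),x)-f(x)$ is a $c'$-elementary function, the points with $f(x)=+\infty$ contributing only the constant $-\infty$, while if $f$ attains $-\infty$ then $f^{c}\equiv+\infty$; in every case $f^{c}$ is a pointwise supremum of $c'$-elementary functions, hence $\ep$-convex. Symmetrically, $g^{c'}(x)=\sup_{w\in W}\{c(x,w)-g(w)\}$ is, up to the degenerate constant $+\infty$, a pointwise supremum of $c$-elementary functions, hence e-convex by \cite{MLVP2011}. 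Item (iv) is the Fenchel--Young inequality in this scheme: fixing $w\in W$, from $c(x,w)-f(x)\le f^{c}(w)$ for all $x$ one gets $c(x,w)-f^{c}(w)\le f(x)$, and taking the supremum over $w$ yields $f^{cc'}\le f$; the inequality $g^{c'c}\le g$ is the mirror statement. The only care needed here is the bookkeeping with infinite values, which the sign convention settles.

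Items (ii) and (iii) I would handle together, proving first the implication ``$f$ e-convex $\Rightarrow f=f^{cc'}$'' of (iii): writing an e-convex $f$ as a pointwise supremum of $c$-elementary functions $c(\cdot,w_i)-\beta_i$, the inequalities $c(x,w_i)-\beta_i\le f(x)$ for all $x$ give $\beta_i\ge f^{c}(w_i)$, hence $c(x,w_i)-\beta_i\le c(x,w_i)-f^{c}(w_i)\le f^{cc'}(x)$, so $f\le f^{cc'}$, and with (iv) this forces $f=f^{cc'}$ (the degenerate case $f\equiv+\infty$ being checked directly). The reverse implication of (iii) is immediate from (i). For (ii), assume $f$ admits a proper e-convex minorant $h_0$; then $f^{cc'}$ is e-convex by (i), is a minorant of $f$ by (iv), and from $h_0\le f$ one gets $f^{c}\le h_0^{c}$, hence $f^{cc'}\ge h_0^{cc'}=h_0$, so $f^{cc'}$ is proper; running the argument above for an arbitrary e-convex minorant $h$ of $f$ (whose $c$-elementary pieces $c(\cdot,w_i)-\beta_i$ now satisfy $\beta_i\ge f^{c}(w_i)$) shows $h\le f^{cc'}$, whence $f^{cc'}$ is the largest e-convex minorant of $f$, that is $f^{cc'}=\eco f$. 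All the statements about $g$, about $\ep$-convexity and about $\epco g=g^{c'c}$ follow by exchanging the roles of $X$ and $W$, and of $c$ and $c'$.

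The step I expect to be the real obstacle is not the algebra but the careful treatment of improper functions and of the value $-\infty$ in items (ii) and (iii). The hypotheses ``$f$ has a proper e-convex minorant'' and ``$f$ does not take on $-\infty$'' are exactly what rules out the pathological constant function $-\infty$ --- whose status both as an e-convex function and as a pointwise supremum of $c$-elementary functions is the genuinely delicate point --- and what guarantees that the suprema defining $f^{c}$ and $f^{cc'}$ are not vacuous. Once those cases are isolated, the remainder is the formal machinery of Moreau's scheme specialised to $c$, read through the dictionary ``$c$-convex $\leftrightarrow$ e-convex'' and ``$c'$-convex $\leftrightarrow$ $\ep$-convex''.
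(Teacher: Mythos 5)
This theorem is quoted in the paper as a preliminary result from \cite{ML2005} (Prop.~6.1, 6.2, Cor.~6.1) and carries no proof in the manuscript, so there is nothing internal to compare against; your reconstruction via Moreau's generalized conjugation scheme is precisely the route the cited source takes. Your argument is correct: (i) and (iv) are the $\Phi$-conjugation facts that a conjugate is a supremum of elementary functions and that Fenchel--Young yields $f^{cc^\prime}\leq f$, and (ii)--(iii) follow from the representation of proper e-convex (resp.\ $\ep$-convex) functions as suprema of $c$-elementary (resp.\ $c^\prime$-elementary) functions together with the inequality $\beta\geq f^{c}(w)$ for each elementary minorant $c(\cdot,w)-\beta$. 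You correctly identify that the only delicate points are the improper cases (the constants $\pm\infty$ and the role of the hypotheses ``proper e-convex minorant'' and ``does not take the value $-\infty$''), and your handling of them is consistent with the sign convention stated in the paper.
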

According to $(iii)$, it makes sense to say that, if $f$ does not take on the value $-\infty $, then $f$ is e-convex at a point $x_0 \in X$ if and only if $f(x_0)=f^{cc^{\prime }}(x_0)$ and $g$ is e$^{\prime }$-convex at a point $(x_0^*,y_0^*,\alpha_0) \in W$ if and only if $g(x_0^*,y_0^*,\alpha_0)=g^{c^{\prime }c}(x_0^*,y_0^*,\alpha_0)$.\\

We finish with the following definition of a kind of sets that complements the family of e-convex sets. Their main properties were developed in \cite{FV2020} and they were used in  \cite{F2015,FVR2016,FV2018,FV2016SSD} in the study of regularity conditions for different pairs of primal-dual problems.

\begin{definition}[Def.2,~\cite{FVR2012}]
A set $D \subset W \times \R$ is \textit{$\ep$-convex} if there exists an $\ep$-convex function $k : W \to \R$ such that $D = \epi k$. The \textit{$\ep$-convex hull} of an arbitrary set $D \subset W \times \R$ is defined as the smallest
$\ep$-convex set containing $D$, and it will be denoted by $\epco D$.
\end{definition}

\section{Dual problems}
\label{sec:Dual_problems}
Consider the following optimization problem 
\begin{equation}
	\label{eq:Primal_problem}
	\tag{$P$}
	\inf_A \left \lbrace f(x)-g(x) \right\rbrace
	\end{equation}
where 
\begin{equation}
	\label{eq:Set_A}
	A=\left\{x\in X\,:\, h_t(x)\leq 0, ~t\in T\right\}
\end{equation}
and $f,g, h_t:X\to\Ramp$ are proper convex functions, for all $t\in T$, being $T$ an arbitrary index set. 
Using the perturbational approach, in \cite{FV2023} a Fenchel dual problem for $(P)$ is presented. Additionally, and using a convexification technique, another dual problem is obtained, being both problems equivalent whenever the function $g$ in \eqref{eq:Primal_problem} is e-convex. Also characterizations of weak duality, zero duality gap and strong duality are given, as well as a relationship between strong duality for both pairs of primal-dual problems.

Inspired by that work, firstly, in this section, we will introduce two Lagrange dual problems  that are also equivalent in case the function $g$ in \eqref{eq:Primal_problem} is e-convex.
We assume that $f(x)-g(x)=+\infty$ in case $x \not \in \dom f$, which implies that $f-g$ will be a proper function if $\dom f \subseteq \dom g$, and, consequently, for all $(u^*, v^*, \gamma ) \in \dom g^c$, $\dom f \subseteq H_{v^*, \gamma}^-$.

According to \cite{FVR2016}, a dual problem for \eqref{eq:Primal_problem} can be built using the perturbational approach (see, for instance, \cite{ET1976}), by means of the $c$-conjugate of the perturbation function 
\begin{equation}
	\label{eq:Dual_problem_1}
	\tag{$D_L$}
	\sup_{\RTD_+}\inf_{X} \left\{f(x)-g(x)+\lambda h(x)\right\},
\end{equation}
verifying weak duality, i.e., $v(P) \geq v \eqref{eq:Dual_problem_1}$. Here $\RTD$ denotes the space of generalized finite sequences, i.e., a sequence $\lambda=(\lambda_t)_{t\in T}$ belongs to $\RTD$ if only finitely many $\lambda_t$ are different from zero, and ${\RTD_+}$ represents its nonnegative polar cone.
It is worth emphasizing that such a dual problem was obtained in \cite{FVR2016} without restrictions on the involved functions, just proper extended real valued functions. The same weak duality result was shown in \cite[Lem.\,2.2]{MLV1999} using the Fenchel conjugate pattern.

In case the function $g$ is e-convex, being moreover proper, we can use the equality $g=g^{cc^\prime}$, by Theorem \ref{thm:Theorem_charac} $(iii)$, to deduce 
\begin{align*}
	\inf_{X} & \left\{ f(x)-g(x)+\lambda h(x)\right\}\\
	=& \inf_{X} \left\{ f(x)-\sup_{W}\left\{ c(x,(u^*,v^*,\gamma))-g^c(u^*,v^*,\gamma)\right\} +\lambda h(x)\right\}\\
	=& \inf_{W} \left\{ g^c(u^*,v^*,\gamma)-\sup_{X}\left\{ c(x,(u^*,v^*,\gamma))-(f(x)+\lambda h(x))\right\} \right\}\\
	=& \inf_{W}\left\{g^c(u^*,v^*,\gamma)-(f+\lambda h)^c(u^*,v^*,\gamma)\right\}.
\end{align*} 
It leads to reformulate the dual problem \eqref{eq:Dual_problem_1} as
\begin{equation}
	\label{eq:Dual_problem_2}
	\tag{$\overline{D}_L$}	
	\sup_{\RTD_+}\inf_{ W} \left\{g^c(u^*,v^*,\gamma)-(f+\lambda h)^c(u^*,v^*,\gamma)\right\}.
	\end{equation}
Nevertheless, without assuming the e-convexity of $g$, the dual problems \eqref{eq:Dual_problem_1} and \eqref{eq:Dual_problem_2} may not be equivalent and, even more, weak duality might not hold for the dual pair \eqref{eq:Primal_problem}$-$\eqref{eq:Dual_problem_2} as we can see in the following example.

\begin{example}
\label{ex:No_equivalence_L}
Let $T$ be a singleton and $f,g,h:X\to\Ramp$ defined by
\begin{equation*}
	f(x)=\left\{
	\begin{aligned}
		x,~&~\text{if } x\geq 0,\\
		+\infty, ~&~ \text{otherwise;}
	\end{aligned}
	\right.
	\hspace{0.35cm}
	g(x)=\left\{
	\begin{aligned}
		x,~&~\text{if } x> 0,\\
		1,~&~\text{if } x=0,\\
		+\infty, ~&~ \text{otherwise;}
	\end{aligned}
	\right.
	\hspace{0.35cm}
	h(x)=\left\{
	\begin{aligned}
		-x,~&~\text{if } x\geq 0,\\
		+\infty, ~&~ \text{otherwise.}
	\end{aligned}
	\right.
\end{equation*}

Since $A=\left\lbrace x \in \R: h(x) \leq 0 \right\rbrace= \left[0, +\infty \right]$, we obtain, for all $x \geq 0$ \
\begin{equation*}
f(x)-g(x)=\left\{
	\begin{aligned}
		0,~&~\text{if } x> 0,\\
		-1,~&~\text{if } x=0,\\
		+\infty, ~&~ \text{otherwise,}
	\end{aligned}
	\right.
\end{equation*}
and $v\eqref{eq:Primal_problem}=\inf\left\lbrace f(x)-g(x):x \geq 0 \right\rbrace=-1$.\\ 
On the other hand, we have, for $(u^*, v^*, \alpha) \in \R^3$,  
\begin{align*}
g^c(u^*,v^*,\alpha) &= \sup_\R \left\lbrace c(x,(u^*, v^*, \alpha))-g(x)\right\rbrace\\
&=\left\{
	\begin{aligned}
		\sup\left\lbrace \sup_{x \geq 0} \left\lbrace x(u^*-1) \right\rbrace,-1 \right\rbrace,~&~\text{if } v^* \leq 0, \alpha >0\\
		+\infty, ~&~ \text{otherwise,}
	\end{aligned}
	\right.	\\
	&=\left\{
	\begin{aligned}
		0,~&~\text{if } u^* \leq 1, v^* \leq 0, \alpha >0\\
		+\infty, ~&~ \text{otherwise.}
	\end{aligned}
	\right.
\end{align*}
Moreover, $\dom g^c=\left\lbrace (u^*, v^*, \alpha)\in \R^3:u^* \leq 1, v^* \leq 0, \alpha >0 \right\rbrace$.
Now, for all $(u^*, v^*, \alpha)\in \dom g^c$ and $\lambda \geq 0$,
\begin{align*}
(f+\lambda h)^c(u^*,v^*,\alpha) &= \sup_\R \left\lbrace c(x,(u^*, v^*, \alpha))-(f+\lambda h)(x)\right\rbrace\\
&=\sup_{x\geq 0} \left\lbrace x(u^*-1+\lambda)\right\rbrace
=\left\{
	\begin{aligned}
		0,~&~\text{if } u^* \leq 1-\lambda\\
		+\infty, ~&~ \text{otherwise.}
	\end{aligned}
	\right.	\\
	\end{align*}
Finally, we have, for all $(u^*, v^*, \alpha)\in \dom g^c$ and $\lambda \geq 0$,
\begin{equation*}
g^c(u^*,v^*,\alpha)-(f+\lambda h)^c(u^*,v^*,\alpha)=
\left\{
	\begin{aligned}
		0,~&~\text{if }u^* \leq 1-\lambda\\
		-\infty, ~&~ \text{otherwise,}
	\end{aligned}
	\right.	\\
\end{equation*}	
and, for all $\lambda > 0$,$$\inf_{\dom g^c}\left\lbrace g^c(u^*,v^*,\alpha)-(f+\lambda h)^c(u^*,v^*,\alpha)\right\}=-\infty,$$
but, for $\lambda =0$,$$\inf_{\dom g^c}\left\lbrace g^c(u^*,v^*,\alpha)-f^c(u^*,v^*,\alpha)\right\}=0,
$$ hence $v\eqref{eq:Dual_problem_2}=0$ and weak duality does not hold for \eqref{eq:Primal_problem}$-$\eqref{eq:Dual_problem_2}. In addition, it is not difficult to compute the optimal value of \eqref{eq:Dual_problem_1}, since 
\begin{equation*}
	 f(x)-g(x)+\lambda h(x)=
	\left\{
	\begin{aligned}
		-\lambda x,~&~\text{if }x >0\\
		-1,~&~ \text{if }x=0\\
		+\infty, ~&~ \text{otherwise,}
	\end{aligned}
	\right.	\\
\end{equation*}
and, for $\lambda \geq 0$,
\begin{equation*}
	 \inf_{x\geq 0}\left\lbrace  f(x)-g(x)+\lambda h(x) \right\rbrace=
	\left\{
	\begin{aligned}
		-\infty,~&~\text{if }\lambda >0\\
		-1,~&~ \text{if }\lambda=0,\\
	\end{aligned}
	\right.	\\
\end{equation*}
then
\begin{equation*}
	v\eqref{eq:Dual_problem_1}=\sup_{\lambda \geq 0}~\inf_{x\geq 0} \left\lbrace  f(x)-g(x)+\lambda h(x) \right\rbrace=-1
\end{equation*}
and, finally, 
\begin{equation*}
	v\eqref{eq:Dual_problem_1}= v\eqref{eq:Primal_problem} < v\eqref{eq:Dual_problem_2}. 
\end{equation*}
\begin{flushright}
	$\square$
\end{flushright}
\end{example}
As we can observe in the previous example, not only weak duality between $(P)$ and $\eqref{eq:Dual_problem_1}$ is guaranteed, but also there is no duality gap, condition that may not happen, in general. Moreover, in duality theory, there exists an interest for finding conditions which assure strong duality, i.e., when there is no duality gap and the dual problem is solvable.
 
Therefore, the rest of the paper is devoted to the derivation of conditions ensuring weak duality, in case $g$ is not e-convex, for \eqref{eq:Primal_problem}$-$\eqref{eq:Dual_problem_2}, and conditions for zero duality gap and strong duality for both pairs \eqref{eq:Primal_problem}$-$\eqref{eq:Dual_problem_2} and \eqref{eq:Primal_problem}$-$\eqref{eq:Dual_problem_1}. We will also compare both strong dualities making use of the theoretical results we derive in Section \ref{sec:Cond_WD}.

\section{Duality results for \eqref{eq:Primal_problem}$-$\eqref{eq:Dual_problem_2}}
\label{sec:Cond_WD}
As we have seen in Section \ref{sec:Dual_problems}, if $g$ is an e-convex function, there exists weak duality between \eqref{eq:Primal_problem} and \eqref{eq:Dual_problem_2}. If we eliminate the even convexity assumption on $g$, we can also guarantee weak duality under the hypothesis of the following proposition.

\begin{proposition}
\label{prop:Prop_WD}
Let us suppose that $g$ in the primal problem $(P)$ has a proper e-convex minorant. If \eqref{eq:Primal_problem} is solvable and there exists a solution where $g$ is e-convex, then weak duality holds for \eqref{eq:Primal_problem}$-$\eqref{eq:Dual_problem_2}.
\end{proposition}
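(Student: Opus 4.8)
The idea is to reduce the statement to the already-settled evenly convex case, exploiting that the dual \eqref{eq:Dual_problem_2} sees $g$ only through its $c$-conjugate. Set $\bar g:=\eco g$. Since $g$ has a proper e-convex minorant, $\bar g$ is proper and e-convex, $\bar g\le g$, and Theorem~\ref{thm:Theorem_charac}$(ii)$ gives $\bar g=g^{cc^\prime}$. The key observation is that $g^c=\bar g^c$: applying the order-reversing property of $c$-conjugation to $g^{cc^\prime}\le g$ yields $g^{cc^\prime c}\ge g^c$, while Theorem~\ref{thm:Theorem_charac}$(iv)$ applied to the function $g^c$ on $W$ gives $g^{cc^\prime c}=(g^c)^{c^\prime c}\le g^c$; hence $g^{cc^\prime c}=g^c$, that is, $\bar g^c=(g^{cc^\prime})^c=g^c$. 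Consequently the objective of \eqref{eq:Dual_problem_2} is literally unchanged when $g$ is replaced by $\bar g$, so $v\eqref{eq:Dual_problem_2}$ is the optimal value of the analogous reformulated dual associated with the auxiliary DC problem $\inf_A\{f(x)-\bar g(x)\}$. Since $\dom f\subseteq\dom g\subseteq\dom\bar g$, this auxiliary problem inherits the standing properness assumption.

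Next I would invoke the computation carried out in Section~\ref{sec:Dual_problems}, which now applies \emph{verbatim} to $\bar g$ because $\bar g$ is proper and e-convex, hence $\bar g=\bar g^{cc^\prime}$: for every $\lambda\in\RTD_+$,
\begin{equation*}
	\inf_X\{f(x)-\bar g(x)+\lambda h(x)\}=\inf_W\left\{\bar g^c(u^*,v^*,\gamma)-(f+\lambda h)^c(u^*,v^*,\gamma)\right\}=\inf_W\left\{g^c-(f+\lambda h)^c\right\}.
\end{equation*}
Taking the supremum over $\lambda\in\RTD_+$ shows that $v\eqref{eq:Dual_problem_2}$ coincides with the optimal value of the standard Lagrange dual \eqref{eq:Dual_problem_1} of $\inf_A\{f(x)-\bar g(x)\}$. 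By the weak duality inequality that holds for \eqref{eq:Dual_problem_1} with no restriction on the data (recalled in Section~\ref{sec:Dual_problems}, cf. \cite{FVR2016,MLV1999}), we obtain
\begin{equation*}
	v\eqref{eq:Dual_problem_2}\le \inf_A\{f(x)-\bar g(x)\}.
\end{equation*}

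Finally I would evaluate at the solution. Let $\xb\in A$ be a solution of \eqref{eq:Primal_problem} at which $g$ is e-convex; by the remark following Theorem~\ref{thm:Theorem_charac} (recall $g$ is proper, hence does not take the value $-\infty$), $g(\xb)=g^{cc^\prime}(\xb)=\bar g(\xb)$. Therefore
\begin{equation*}
	v\eqref{eq:Dual_problem_2}\le \inf_A\{f(x)-\bar g(x)\}\le f(\xb)-\bar g(\xb)=f(\xb)-g(\xb)=v\eqref{eq:Primal_problem},
\end{equation*}
which is precisely weak duality for \eqref{eq:Primal_problem}$-$\eqref{eq:Dual_problem_2}. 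The only step that requires genuine care is establishing the identity $g^c=\bar g^c$ and the resulting invariance of \eqref{eq:Dual_problem_2} under passing to the e-convex hull of $g$; once this reduction is in place, the proof is just the known e-convex case combined with the hypothesis that $g$ and $\eco g$ agree at an optimal solution.
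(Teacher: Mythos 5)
Your proposal is correct and follows essentially the same route as the paper's proof: pass to the auxiliary problem $\inf_A\{f-\eco g\}$, observe that $(\eco g)^c=(g^{cc^\prime})^c=g^c$ so that \eqref{eq:Dual_problem_2} is unchanged and coincides with the standard Lagrange dual of the auxiliary problem (for which weak duality always holds), and then evaluate at a minimizer where $g=\eco g$. Your explicit justification of $g^c=(\eco g)^c$ via order reversal together with Theorem~\ref{thm:Theorem_charac}$(iv)$ is a slightly more detailed version of the step the paper dispatches with ``it is easy to see.''
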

\begin{proof}
We build the problem
\begin{equation}
	\label{eq:Primal_problem_prop}
	\tag{$P_e$}
	\inf_A \left\lbrace f(x)-\eco g(x)\right\rbrace.
	\end{equation}

According to \cite{FVR2016} again, we can derive a dual problem for $(P_e)$, verifying weak duality

\begin{equation}
	\label{eq:Dual_problem_prop}
	\tag{$D_{e,L}$}
	\sup_{\RTD_+}\inf_{X} \left\{f(x)-\eco g(x)+\lambda h(x)\right\}.
	\end{equation}

In this case, $\eco g$ is e-convex, and following the steps done for the construction of  $\eqref{eq:Dual_problem_2}$ for $(P)$, we obtain an equivalent problem to $(D_{e,L})$
\begin{equation*}
	\sup_{\RTD_+}\inf_{ W} \left\{(\eco g)^c(u^*,v^*,\gamma)-(f+\lambda h)^c(u^*,v^*,\gamma)\right\}.
\end{equation*}
However, it is easy to see, by means of Theorem \ref{thm:Theorem_charac} $(ii)$, that  $(\eco g)^{c}=(g^{cc^\prime})^c=g^c$, so  $\eqref{eq:Dual_problem_2}$ and $(D_{e,L})$ are equivalent, thus $v\eqref{eq:Dual_problem_2}\leq v(P_e)$. Naming $x_0$ to a solution of $(P)$ where $g$ is e-convex, $g(x_0)=\eco g(x_0)$, and we obtain
\begin{align*}
	v(P)&=f(x_0)-g(x_0)+\delta_A(x_0)= f(x_0)-\eco g(x_0) + \delta_A(x_0)\\
	&\geq v(P_e)\geq v\eqref{eq:Dual_problem_2}.
\end{align*}
\end{proof}

\begin{corollary}
\label{cor:WD}
If $(P)$ is solvable and $x_0$ is a solution verifying $\partial_c g(x_0)\neq \emptyset$, then weak duality holds for \eqref{eq:Primal_problem}$-$\eqref{eq:Dual_problem_2}.
\end{corollary}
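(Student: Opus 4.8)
The plan is to obtain this as a direct consequence of Proposition \ref{prop:Prop_WD}, by showing that the single hypothesis $\partial_c g(x_0)\neq\emptyset$ already encodes both structural assumptions needed there: that $g$ has a proper e-convex minorant, and that $x_0$ is a solution of \eqref{eq:Primal_problem} at which $g$ is e-convex. So the first step is to unpack the $c$-subdifferential. Fix $(u^*,v^*,\gamma)\in\partial_c g(x_0)$; by definition this is equivalent to the Young-type equality $g(x_0)+g^c(u^*,v^*,\gamma)=c(x_0,(u^*,v^*,\gamma))$. Since $x_0$ is a feasible solution of \eqref{eq:Primal_problem} and $f-g$ is proper on $\dom f\ni x_0$, the number $g(x_0)$ is finite, and because $g$ is proper we always have $g^c>-\infty$; hence the equality forces $c(x_0,(u^*,v^*,\gamma))$ to be finite, i.e. $\ci x_0,v^*\cd<\gamma$ and $g^c(u^*,v^*,\gamma)\in\R$. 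Consequently the $c$-elementary function $m:=c(\cdot,(u^*,v^*,\gamma))-g^c(u^*,v^*,\gamma)$ is proper, and $m\leq g$ by the very definition of $g^c$. Being $c$-elementary, $m$ is e-convex (its epigraph is the intersection of an open half-space and a closed half-space in $X\times\R$, and both open convex and closed convex sets are e-convex). Thus $g$ admits a proper e-convex minorant.

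Next I would verify even convexity of $g$ at $x_0$. Evaluating the supremum defining $g^{cc^\prime}$ at the single triple $(u^*,v^*,\gamma)$ and invoking the Young equality gives
\begin{equation*}
g^{cc^\prime}(x_0)\geq c(x_0,(u^*,v^*,\gamma))-g^c(u^*,v^*,\gamma)=g(x_0),
\end{equation*}
while Theorem \ref{thm:Theorem_charac}$(iv)$ supplies the reverse inequality $g^{cc^\prime}(x_0)\leq g(x_0)$. Since $g$ never takes the value $-\infty$, this is exactly the statement that $g$ is e-convex at $x_0$ in the sense recalled right after Theorem \ref{thm:Theorem_charac}.

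With $g$ possessing a proper e-convex minorant, \eqref{eq:Primal_problem} solvable by hypothesis, and $x_0$ a solution at which $g$ is e-convex, all assumptions of Proposition \ref{prop:Prop_WD} are met, and weak duality for \eqref{eq:Primal_problem}$-$\eqref{eq:Dual_problem_2} follows at once. The only point that genuinely requires care is the first step — translating $\partial_c g(x_0)\neq\emptyset$ into the existence of a proper $c$-elementary minorant of $g$ that is exact at $x_0$, which hinges on the finiteness bookkeeping for the $c$-conjugate — everything afterwards is a routine application of Theorem \ref{thm:Theorem_charac} and Proposition \ref{prop:Prop_WD}.
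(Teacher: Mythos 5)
Your proof is correct and follows essentially the same route as the paper: the paper's one-line proof invokes an external result (Th.~3(ii) of the cited reference) stating precisely that $\partial_c g(x_0)\neq\emptyset$ yields a proper e-convex minorant of $g$ together with even convexity of $g$ at $x_0$, and then applies Proposition \ref{prop:Prop_WD}. You have simply proved that cited implication inline via the Young-type equality, which is a faithful (and more self-contained) rendering of the same argument.
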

\begin{proof}
This result is an immediate consequence of \cite[Th.\,3 (ii)]{FV2022}.
\end{proof}

In order to derive additional conditions ensuring weak and strong dualities, and zero duality gap for this dual pair, we define the following sets, being $A$ the set given in \eqref{eq:Set_A}
\begin{align}
	\label{eq:Set_Kp}
	K &:= \bigcup_{\RTD_+}\bigcap_{\dom g^c} \left\{ \epi(f+\lambda h)^c-(u^*,v^*,\gamma,g^c(u^*,v^*,\gamma))\right\},\\
	\label{eq:Set_Lambda}
	\Lambda &:= \bigcap_{\dom g^c} \left\{ \epi(f+\delta_A)^c-(u^*,0,0,g^c(u^*,v^*,\gamma))\right\}.
\end{align}
Regarding these sets, we state the following lemma.
\begin{lemma}
\label{lem:Lemma_aux_C}
Let $K$ and $\Lambda$ the sets defined in \eqref{eq:Set_Kp} and \eqref{eq:Set_Lambda}, respectively. It holds
\begin{itemize}
	\item[i)] $\Lambda=\epi(f-\eco g+\delta_A)^c$.
	\item[ii)] $K \supseteq \bigcup_{\RTD_+} \epi(f-\eco g +\lambda h)^c$.	
\end{itemize}
\end{lemma}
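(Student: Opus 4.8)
The plan is to unwind both sets through the interplay between infima/suprema over indices and the $c$-conjugate, using the description of $g^c$ in terms of the Fenchel conjugate together with the open-half-space condition $\dom g\subseteq H_{v^*,\gamma}^-$, and the identity $\eco g=g^{cc^\prime}$ from Theorem~\ref{thm:Theorem_charac}~(ii) (valid since $g$ is assumed to have a proper e-convex minorant throughout this section).

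For part (i), I would first observe that, by the same computation that led from \eqref{eq:Dual_problem_1} to \eqref{eq:Dual_problem_2} (now applied to the function $f+\delta_A$ in place of $f+\lambda h$, and to $g$), one has the pointwise identity
\begin{equation*}
	(f-\eco g+\delta_A)(x)=\sup_{W}\left\{(f+\delta_A)^c(u^*,v^*,\gamma)+\,\text{[coupling terms]}\,\right\}^{\!c^\prime\!}\text{-type expression},
\end{equation*}
so that, taking $c$-conjugates and exchanging the two suprema, $(f-\eco g+\delta_A)^c(x^*,y^*,\delta)$ becomes
\begin{equation*}
	\inf_{(u^*,v^*,\gamma)\in\dom g^c}\left\{(f+\delta_A)^c(x^*+u^*,\,y^*,\,\delta)-g^c(u^*,v^*,\gamma)\right\}
\end{equation*}
after the change of variable $x^*\mapsto x^*+u^*$, with the constraint that the $y^*$-coordinate is unaffected because $\dom(f+\delta_A)\subseteq\dom f\subseteq H_{v^*,\gamma}^-$ already. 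Passing to epigraphs, the infimum over $(u^*,v^*,\gamma)$ turns into an intersection, a value in the epigraph is shifted by $(u^*,0,0,g^c(u^*,v^*,\gamma))$, and one recovers exactly the set $\Lambda$ in \eqref{eq:Set_Lambda}. The bookkeeping here — that only the first coordinate and the height shift, not $y^*$ or $\delta$ — is the delicate point, and it is precisely what the standing hypothesis $\dom f\subseteq H_{v^*,\gamma}^-$ for all $(u^*,v^*,\gamma)\in\dom g^c$ is designed to make clean.

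For part (ii), the idea is to fix $\lambda\in\RTD_+$ and show $\epi(f-\eco g+\lambda h)^c\subseteq \bigcap_{\dom g^c}\{\epi(f+\lambda h)^c-(u^*,v^*,\gamma,g^c(u^*,v^*,\gamma))\}$, after which taking the union over $\lambda$ yields the claim. This reduces to the same manipulation as in (i) but with $\delta_A$ replaced by $\lambda h$ and the $y^*,\gamma$ coordinates now genuinely shifted (which is why one only gets an inclusion, not equality, and why the union appears outside rather than inside): one uses $\eco g=g^{cc^\prime}\le g$ and the inequality $(f+\lambda h - \eco g)^c \le \inf_{W}\{\ldots\}$ coming from $-\eco g = -g^{cc^\prime} = -\sup_W\{c(\cdot,w)-g^c(w)\}$ and the fact that an infimum of conjugates dominates the conjugate of the sum only in one direction in general. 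I expect the main obstacle to be organizing the coordinate shifts and the sign conventions for $(+\infty)+(-\infty)=-\infty$ so that the epigraph translations land exactly on $(u^*,v^*,\gamma,g^c(u^*,v^*,\gamma))$ and on $(u^*,0,0,g^c(u^*,v^*,\gamma))$ respectively; once that is pinned down, both parts follow by routine rewriting of infima as intersections of epigraphs.
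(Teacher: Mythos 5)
Your overall architecture is the paper's: write $\eco g=g^{cc^\prime}$ as a pointwise supremum of $c$-elementary functions, absorb each term $-c(\cdot,(u^*,v^*,\gamma))+g^c(u^*,v^*,\gamma)$ into the argument of the $c$-conjugate as a translation, and read the result off on epigraphs. (For part~(i) the paper simply cites \cite[Prop.~5.5]{FV2023}; for part~(ii) it fixes $\lambda\in\RTD_+$ and proves the pointwise inequality $(f-\eco g+\lambda h)^c(x^*,y^*,\alpha)\geq(f+\lambda h)^c(x^*+u^*,y^*+v^*,\alpha+\gamma)-g^c(u^*,v^*,\gamma)$ for every $(u^*,v^*,\gamma)\in\dom g^c$, then takes the union over $\lambda$, exactly as you propose.)

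The problem is that the two key relations in your sketch point the wrong way, and the whole content of the lemma is precisely the direction of these relations. In~(i), from $f-\eco g+\delta_A=\inf_{\dom g^c}\left\{f+\delta_A-c(\cdot,(u^*,v^*,\gamma))+g^c(u^*,v^*,\gamma)\right\}$ and the rule $(\inf_i h_i)^c=\sup_i h_i^c$ one obtains
\begin{equation*}
(f-\eco g+\delta_A)^c(x^*,y^*,\delta)=\sup_{\dom g^c}\left\{(f+\delta_A)^c(x^*+u^*,y^*,\delta)-g^c(u^*,v^*,\gamma)\right\},
\end{equation*}
a \emph{supremum}, not the infimum you wrote; and it is the epigraph of a supremum, not of an infimum, that equals the intersection of the epigraphs (an infimum would correspond to a union). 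Your two slips cancel and you land on $\Lambda$, but neither intermediate step is correct as stated. In~(ii) the inequality you invoke, $(f+\lambda h-\eco g)^c\leq\inf_W\{\cdots\}$, is reversed: what is needed (and true) is the \emph{lower} bound $(f-\eco g+\lambda h)^c(x^*,y^*,\alpha)\geq(f+\lambda h)^c(x^*+u^*,y^*+v^*,\alpha+\gamma)-g^c(u^*,v^*,\gamma)$, which follows from $c(x,(x^*,y^*,\alpha))+c(x,(u^*,v^*,\gamma))\geq c(x,(x^*+u^*,y^*+v^*,\alpha+\gamma))$ together with $\dom f\subseteq H_{v^*,\gamma}^-$. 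With your $\leq$ one cannot pass from $(x^*,y^*,\alpha,\beta)\in\epi(f-\eco g+\lambda h)^c$ to $(x^*+u^*,y^*+v^*,\alpha+\gamma,\beta+g^c(u^*,v^*,\gamma))\in\epi(f+\lambda h)^c$, so the inclusion into $\bigcap_{\dom g^c}\{\cdots\}$, and hence into $K$, would not follow. Once the directions are corrected, the remaining bookkeeping you describe (full shift of all three dual coordinates in~(ii), versus only the first coordinate in~(i)) is exactly the paper's argument.
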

\begin{proof}
 \textit{i)} The proof is stated in \cite[Prop. 5.5]{FV2023}.\\
 \textit{ii)} For any $\lambda\in\RTD_+$, it is not difficult to see that
\begin{equation*}
	f-\eco g+\lambda h = \inf_{\dom g^c}\left\{ -c(\cdot,(u^*,v^*,\gamma))+g^c(u^*,v^*,\gamma)+f+\lambda h\right\}.
\end{equation*}
Applying $c$-conjugation standard properties,  we obtain
\begin{align*}
	(f-&\eco g+\lambda h)^c(x^*,y^*,\alpha) = \sup_{X} \left\{ c(x,(x^*,y^*,\alpha))-(f-\eco g+\lambda h)(x)\right\}\\
	&=\sup_{X,~ \dom g^c} \left\{ c(x,(x^*,y^*,\alpha))+c(x,(u^*,v^*,\gamma))-(f+\lambda h)(x)-g^c(u^*,v^*,\gamma)\right\}\\
	&\geq \sup_{X,~\dom g^c}\left\{ c(x,(x^*+u^*,y^*+v^*,\alpha+\gamma))-(f+\lambda h)(x)-g^c(u^*,v^*,\gamma)\right\}\\
	&\geq (f+\lambda h)^c(x^*+u^*,y^*+v^*,\alpha+\gamma)-g^c(u^*,v^*,\gamma),
\end{align*}
for all $(u^*,v^*,\gamma)\in\dom g^c$. Then, if $(x^*,y^*,\alpha,\beta)\in\epi(f-\eco g+\lambda h)^c$, 
	$$(x^*+u^*,y^*+v^*,\alpha+\gamma,\beta+g^c(u^*,v^*,\gamma))\in\epi(f+\lambda h)^c$$
for all $(u^*,v^*,\gamma)\in\dom g^c$. Since $\lambda$ was any sequence from $\RTD_+$, we get the inclusion given in $ii)$.
\end{proof}
\noindent
Now, we continue presenting a useful lemma from \cite{FV2023} and adapting \cite[Lem.~4.1~(ii)]{FV2023} to the set $K$ defined in \eqref{eq:Set_Kp}.
\begin{lemma}\normalfont{\cite[Lem.~4.1~(i)]{FV2023}}
\label{lem:Lemma_FV23}
Let $\beta\in\R$ and $A$ the set defined in \eqref{eq:Set_A}. Then, there exists $\delta>0$ such that $(0,0,\delta,\beta)\in\epi(f-g+\delta_A)^c$ if and only if $v\eqref{eq:Primal_problem}\geq -\beta$.
\end{lemma}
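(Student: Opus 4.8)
The plan is to prove both implications by working directly with the definition of the $c$-conjugate and the coupling function $c$. Recall that $(0,0,\delta,\beta)\in\epi(f-g+\delta_A)^c$ means precisely that $(f-g+\delta_A)^c(0,0,\delta)\le\beta$, and by definition
\[
(f-g+\delta_A)^c(0,0,\delta)=\sup_{x\in X}\bigl\{c(x,(0,0,\delta))-(f(x)-g(x)+\delta_A(x))\bigr\}.
\]
Here the first slot being $0$ means $c(x,(0,0,\delta))$ equals $\langle x,0\rangle=0$ whenever $\langle x,0\rangle<\delta$, i.e. whenever $\delta>0$, and equals $+\infty$ otherwise. So for $\delta>0$ the supremum collapses to $\sup_{x\in X}\{-(f(x)-g(x)+\delta_A(x))\}=-\inf_{x\in X}\{f(x)-g(x)+\delta_A(x)\}=-v\eqref{eq:Primal_problem}$, using that $f-g$ takes the value $+\infty$ off $\dom f$ and the convention on $\delta_A$.

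First I would do the ``only if'' direction: assuming there exists $\delta>0$ with $(0,0,\delta,\beta)\in\epi(f-g+\delta_A)^c$, the computation above gives $-v\eqref{eq:Primal_problem}=(f-g+\delta_A)^c(0,0,\delta)\le\beta$, hence $v\eqref{eq:Primal_problem}\ge-\beta$. Then for the ``if'' direction, assuming $v\eqref{eq:Primal_problem}\ge-\beta$, I would simply pick any $\delta>0$ (say $\delta=1$) and run the same computation in reverse: $(f-g+\delta_A)^c(0,0,\delta)=-v\eqref{eq:Primal_problem}\le\beta$, so $(0,0,\delta,\beta)\in\epi(f-g+\delta_A)^c$. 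The two directions are thus almost symmetric once the key identity $(f-g+\delta_A)^c(0,0,\delta)=-v\eqref{eq:Primal_problem}$ for every $\delta>0$ is established.

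The only genuine subtlety — and the step I expect to need the most care — is the sign convention on $\pm\infty$ and the treatment of $f-g$ as a function: one must check that the supremum defining the conjugate behaves correctly when $f(x)=+\infty$ or $g(x)=+\infty$, and that the standing assumption $f(x)-g(x)=+\infty$ for $x\notin\dom f$ together with $\dom f\subseteq\dom g$ makes $f-g+\delta_A$ proper (or at least not identically $-\infty$), so that $v\eqref{eq:Primal_problem}$ is a well-defined element of $\Ramp$ and the manipulations with the $(-\infty)$-priority convention are legitimate. Once that bookkeeping is in place, the argument is a direct unwinding of definitions; I would present it compactly, noting that the value of $\delta>0$ is immaterial because the first coordinate of the triple is $0$, which is exactly why the statement quantifies existentially over $\delta$.
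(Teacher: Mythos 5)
Your proof is correct. The paper does not actually prove this lemma — it imports it from \cite[Lem.~4.1~(i)]{FV2023} — but your argument is the natural direct one: for $\delta>0$ the coupling condition $\langle x,0\rangle<\delta$ is automatic, so $(f-g+\delta_A)^c(0,0,\delta)=\sup_X\{-(f-g+\delta_A)(x)\}=-v\eqref{eq:Primal_problem}$, which is exactly the identity the paper itself records in Remark \ref{R1}; membership in the epigraph then reduces to $-v\eqref{eq:Primal_problem}\le\beta$. Your observation that the value of $\delta>0$ is immaterial is likewise the content of that remark, and your care with the $-\infty$ sign convention (noting $f-g$ never takes $-\infty$ under the standing assumption $\dom f\subseteq\dom g$) covers the only delicate point.
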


\begin{remark}
\label{R1}
Taking into account that $(f-g+\delta_A)^c(0,0,\delta)=\sup_X \left\lbrace-(f-g+\delta_A)(x) \right\rbrace$, for any $\delta >0$, it is evident that $(0,0,\delta,\beta)\in\epi(f-g+\delta_A)^c$, for a certain $\delta >0$, if and only if $(0,0,\delta,\beta)\in\epi(f-g+\delta_A)^c$, for all $\delta >0$. 
\end{remark}
\begin{lemma}
\label{lem:Lemma_aux}
Let $K$ be the set defined in \eqref{eq:Set_Kp} and $\beta\in\R$. Then, $(0,0,0,\beta)\in K$ if and only if there exists $\lambda\in\RTD_+$ such that for all $(u^*,v^*,\gamma)\in\dom g^c$, it holds
\begin{equation}
\label{eq:Lemma_aux_WD_0}
	g^c(u^*,v^*,\alpha)-(f+\lambda h)^c(u^*,v^*,\gamma) \geq -\beta.
\end{equation}
\end{lemma}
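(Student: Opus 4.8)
The plan is to prove the equivalence purely by unwinding the definition of $K$ given in \eqref{eq:Set_Kp}, so that each step is a rewriting of the previous membership relation. A preliminary remark I would make first: since $g$ is proper, its $c$-conjugate cannot attain the value $-\infty$ (on $\dom g^c$ it coincides with the Fenchel conjugate $g^*$, and $g^*(u^*)\ge\ci x_0,u^*\cd-g(x_0)>-\infty$ for any $x_0\in\dom g$), hence $g^c$ is real-valued on $\dom g^c$. This guarantees that, for each $(u^*,v^*,\gamma)\in\dom g^c$, the translation vector $(u^*,v^*,\gamma,g^c(u^*,v^*,\gamma))$ is a genuine point of $W\times\R$ and that the set difference appearing in \eqref{eq:Set_Kp} is well defined.

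Then I would argue as follows. By the definition of the union over $\RTD_+$, the membership $(0,0,0,\beta)\in K$ holds if and only if there is some $\lambda\in\RTD_+$ such that $(0,0,0,\beta)$ lies in the intersection $\bigcap_{\dom g^c}\{\epi(f+\lambda h)^c-(u^*,v^*,\gamma,g^c(u^*,v^*,\gamma))\}$; and being in this intersection means that for every $(u^*,v^*,\gamma)\in\dom g^c$ one has $(0,0,0,\beta)\in\epi(f+\lambda h)^c-(u^*,v^*,\gamma,g^c(u^*,v^*,\gamma))$. Adding the translation vector to both sides, this last relation is equivalent to $(u^*,v^*,\gamma,\beta+g^c(u^*,v^*,\gamma))\in\epi(f+\lambda h)^c$, which, by the very definition of epigraph, means $(f+\lambda h)^c(u^*,v^*,\gamma)\le\beta+g^c(u^*,v^*,\gamma)$, i.e.\ inequality \eqref{eq:Lemma_aux_WD_0}. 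Re-collecting the quantifier $\exists\,\lambda\in\RTD_+$ followed by $\forall\,(u^*,v^*,\gamma)\in\dom g^c$ yields exactly the statement of the lemma.

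There is no genuinely hard step here; this is a bookkeeping lemma whose purpose is to express, in the convenient form ``$(0,0,0,\beta)\in K$'', the condition that defines the optimal value of \eqref{eq:Dual_problem_2}, so that it can later be combined with Lemma \ref{lem:Lemma_FV23} and Remark \ref{R1} to convert statements about $v\eqref{eq:Dual_problem_2}$ into statements about the set $K$. The only point deserving a word of care is the finiteness of $g^c$ on its domain noted above: it is what makes the right-hand side $\beta+g^c(u^*,v^*,\gamma)$ of the epigraph inequality finite, and the degenerate cases $(f+\lambda h)^c(u^*,v^*,\gamma)=\pm\infty$ are then automatically absorbed by that inequality (equivalently, by \eqref{eq:Lemma_aux_WD_0} with its finite right-hand side $-\beta$).
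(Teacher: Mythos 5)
Your proof is correct and follows essentially the same route as the paper's: unwind the union over $\lambda\in\RTD_+$ and the intersection over $\dom g^c$, translate by $(u^*,v^*,\gamma,g^c(u^*,v^*,\gamma))$, and read off the epigraph inequality $(f+\lambda h)^c(u^*,v^*,\gamma)\le\beta+g^c(u^*,v^*,\gamma)$. Your preliminary observation that $g^c$ is real-valued on $\dom g^c$ is a sensible extra precaution (the paper leaves it implicit) but does not change the argument.
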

\begin{proof}
 A point $(0,0,0,\beta)\in K$ if and only if there exists $\lambda\in\RTD_+$ such that, for all $(u^*,v^*,\gamma)\in\dom g^c$, 
\begin{equation*}
	(0,0,0,\beta)+(u^*,v^*,\gamma,g^c(u^*,v^*,\gamma))\in\epi(f+\lambda h)^c,
\end{equation*}
which is equivalent to the inequality
\begin{equation*}
	\left( f+\lambda h\right)^c(u^*,v^*,\gamma)\leq \beta +g^c(u^*,v^*,\gamma)
\end{equation*}
and \eqref{eq:Lemma_aux_WD_0} fulfills.
\end{proof}

Duality results for the primal-dual pair \eqref{eq:Primal_problem}$-$\eqref{eq:Dual_problem_2} are based on the set
\begin{equation}\label{eq:Set_Kpp}
	K' :=\left\{ (0,0,\delta,\beta)\,:\,(0,0,0,\beta)\in K\,\text{ and } \, \delta>0\right\} 
	\end{equation}
and the auxiliary set
\begin{equation}\label{eq:Set_B}
	B:=\left\{(0,0)\times\R_{++}\times\R\right\}\subseteq W\times \R. 
\end{equation}
Together with the sets $K^\prime$ and $B$, the following lemmas will be necessary in the sequel.

\begin{lemma}
\label{lem:epi(f-g+dA}
If $v(P) \in \R$, it holds
$$\epi(f-g+\delta_A)^c \cap B=\left\{ (0,0,\delta,\beta)\,:\delta >0,\beta\geq-v\eqref{eq:Primal_problem} \right\}=\bigcap_{X}\epi c^\prime(\cdot,x)-v\eqref{eq:Primal_problem}.$$
Consequently, $\epi(f-g+\delta_A)^c \cap B$ is an $\ep$-convex set.
\end{lemma}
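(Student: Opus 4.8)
The plan is to prove the chain of equalities in Lemma~\ref{lem:epi(f-g+dA} from left to right, using Lemma~\ref{lem:Lemma_FV23} together with Remark~\ref{R1} for the first equality and a direct computation with the coupling function $c^\prime$ for the second one. Throughout, the hypothesis $v\eqref{eq:Primal_problem}\in\R$ guarantees that $-v\eqref{eq:Primal_problem}$ is a genuine real number, so that the middle and right-hand sets are well defined.

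\medskip

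\noindent\textbf{First equality.} I would first fix a point of $\epi(f-g+\delta_A)^c\cap B$. By definition of $B$ in \eqref{eq:Set_B}, such a point has the form $(0,0,\delta,\beta)$ with $\delta>0$ and $\beta\in\R$, and belonging to $\epi(f-g+\delta_A)^c$ means precisely $(f-g+\delta_A)^c(0,0,\delta)\leq\beta$. Applying Lemma~\ref{lem:Lemma_FV23}: the existence of \emph{some} $\delta>0$ with $(0,0,\delta,\beta)\in\epi(f-g+\delta_A)^c$ is equivalent to $v\eqref{eq:Primal_problem}\geq-\beta$, i.e. $\beta\geq -v\eqref{eq:Primal_problem}$; and by Remark~\ref{R1} this does not depend on the particular $\delta>0$ chosen, since $(f-g+\delta_A)^c(0,0,\delta)=\sup_X\{-(f-g+\delta_A)(x)\}$ is constant in $\delta>0$. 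Hence $(0,0,\delta,\beta)\in\epi(f-g+\delta_A)^c\cap B$ if and only if $\delta>0$ and $\beta\geq-v\eqref{eq:Primal_problem}$, which is exactly the middle set.

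\medskip

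\noindent\textbf{Second equality.} Here I would unwind the definition of the coupling function $c^\prime$. For fixed $x\in X$, the function $c^\prime(\cdot,x):W\to\Ramp$ satisfies $c^\prime((x^*,y^*,\alpha),x)=c(x,(x^*,y^*,\alpha))$, which equals $\langle x,x^*\rangle$ when $\langle x,y^*\rangle<\alpha$ and $+\infty$ otherwise. Evaluating at triples of the form $(0,0,\delta)$: since $\langle x,0\rangle=0<\delta$ holds exactly when $\delta>0$, we get $c^\prime((0,0,\delta),x)=\langle x,0\rangle=0$ for $\delta>0$ and $+\infty$ for $\delta\leq 0$. Therefore $(0,0,\delta,\beta)\in\epi\bigl(c^\prime(\cdot,x)-v\eqref{eq:Primal_problem}\bigr)$ means $c^\prime((0,0,\delta),x)-v\eqref{eq:Primal_problem}\leq\beta$, i.e. $\delta>0$ and $-v\eqref{eq:Primal_problem}\leq\beta$, independently of $x$. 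Intersecting over all $x\in X$ leaves the condition unchanged (and forces the first two coordinates to be $0$, the third positive, by the structure of the elementary epigraphs), so $\bigcap_X\epi\bigl(c^\prime(\cdot,x)-v\eqref{eq:Primal_problem}\bigr)=\{(0,0,\delta,\beta):\delta>0,\ \beta\geq-v\eqref{eq:Primal_problem}\}$, matching the middle set.

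\medskip

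\noindent\textbf{Final claim.} The $\ep$-convexity of $\epi(f-g+\delta_A)^c\cap B$ is then immediate: each set $\epi\bigl(c^\prime(\cdot,x)-v\eqref{eq:Primal_problem}\bigr)$ is the epigraph of a $c^\prime$-elementary function, hence of an $\ep$-convex function, and the class of $\ep$-convex sets is closed under arbitrary intersections (as recalled in Section~\ref{sec:Pre} for $\eco$ and analogously for $\epco$), so the intersection over $x\in X$ is $\ep$-convex. The only delicate point I anticipate is purely bookkeeping: making sure that intersecting the elementary epigraphs over $x$ does not accidentally produce extra points with nonzero first two coordinates --- but this is handled by noting that for $(x^*,y^*)\neq(0,0)$ one can choose $x$ making $c^\prime((x^*,y^*,\alpha),x)$ either $+\infty$ or arbitrarily large negative, so no such point survives the intersection with a finite right-hand side; alternatively one simply works inside $B$ from the start, where the first two coordinates are $0$ by definition. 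No serious obstacle is expected beyond this routine verification.
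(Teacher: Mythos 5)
Your proposal is correct and follows essentially the same route as the paper: the first equality is obtained from Lemma~\ref{lem:Lemma_FV23} (with Remark~\ref{R1} making the $\delta$-independence explicit), the second by directly evaluating $c^\prime((x^*,y^*,\delta),x)$ and observing that only $x^*=y^*=0$, $\delta>0$, $\beta\geq-v\eqref{eq:Primal_problem}$ survive the intersection over all $x\in X$, and the $\ep$-convexity follows by identifying the set as the epigraph of the $\ep$-convex function $\sup_X\{c^\prime(\cdot,x)-v\eqref{eq:Primal_problem}\}$ (equivalently, an intersection of elementary epigraphs). No gaps.
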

\begin{proof}
The first equality derives directly from Lemma \ref{lem:Lemma_FV23}.
Now a point $(x^*, y^*, \delta, \beta)$ belongs to the set $\bigcap_{X}\epi c^\prime(\cdot,x)-v\eqref{eq:Primal_problem}$ if and only if, for all $x \in X$,
$$c'((x^*, y^*, \delta),x) -v(P) \leq \beta.$$
Equivalently, for all $x \in X$,
$$\left\langle x,y^* \right\rangle < \delta \text{ and } \left\langle x,x^* \right\rangle -v(P) \leq \beta.$$
This means that, necessarily, $x^*=y^*=0$, $\delta >0$ and $ -v(P) \leq \beta.$
So, $\epi(f-g+\delta_A)^c \cap B$ is the epigraph of the $\ep$-convex function $\sup_X \left\lbrace c^\prime(\cdot,x)-v\eqref{eq:Primal_problem}\right\rbrace$ and, hence, it is $\ep$-convex.
\end{proof}

\begin{lemma}
\label{lem: kprime between}
Let us asumme that $v\eqref{eq:Dual_problem_2} \in  \R$. Then
\begin{equation}
\label{eq:incl_kprime}
\left\{ (0,0,\delta,\beta)\,:\delta >0,\beta>-v\eqref{eq:Dual_problem_2}\right\} \subseteq K' \subseteq \left\{ (0,0,\delta,\beta)\,:\delta >0,\beta\geq -v\eqref{eq:Dual_problem_2}\right\}.
\end{equation}
In particular, if $\eqref{eq:Dual_problem_2}$ is solvable, it holds
\begin{equation}
\label{eq:equa_kprime}
K' = \left\{ (0,0,\delta,\beta)\,:\delta >0,\beta\geq -v\eqref{eq:Dual_problem_2}\right\}
\end{equation}
and, consequently, $K'$ is $\ep$-convex.
\end{lemma}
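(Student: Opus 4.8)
The plan is to characterize membership in $K'$ directly via Lemma~\ref{lem:Lemma_aux}, which translates $(0,0,0,\beta)\in K$ into the existence of $\lambda\in\RTD_+$ satisfying the inequality \eqref{eq:Lemma_aux_WD_0} for all $(u^*,v^*,\gamma)\in\dom g^c$. Taking the infimum over $\dom g^c$ and then the supremum over $\lambda\in\RTD_+$, this last condition says exactly that $v\eqref{eq:Dual_problem_2}\geq -\beta$, i.e. $\beta\geq -v\eqref{eq:Dual_problem_2}$; but one has to be careful because the order of ``$\exists\lambda$'' and ``$\sup_\lambda$'' is not innocuous. So I would argue the two inclusions separately.

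For the first inclusion, suppose $\delta>0$ and $\beta>-v\eqref{eq:Dual_problem_2}$. Then $-\beta<v\eqref{eq:Dual_problem_2}=\sup_{\lambda\in\RTD_+}\inf_{\dom g^c}\{g^c(u^*,v^*,\gamma)-(f+\lambda h)^c(u^*,v^*,\gamma)\}$, so by definition of supremum there exists $\lambda\in\RTD_+$ with $\inf_{\dom g^c}\{g^c-(f+\lambda h)^c\}\geq -\beta$ (using the strict inequality to get past the sup, noting the infimum for that particular $\lambda$ need not attain the sup but can be made $>-\beta$, hence $\geq -\beta$). This is precisely \eqref{eq:Lemma_aux_WD_0} for all $(u^*,v^*,\gamma)\in\dom g^c$, so Lemma~\ref{lem:Lemma_aux} gives $(0,0,0,\beta)\in K$, whence $(0,0,\delta,\beta)\in K'$ by the definition \eqref{eq:Set_Kpp}. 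For the second inclusion, take $(0,0,\delta,\beta)\in K'$; by \eqref{eq:Set_Kpp}, $\delta>0$ and $(0,0,0,\beta)\in K$, so Lemma~\ref{lem:Lemma_aux} yields $\lambda\in\RTD_+$ with \eqref{eq:Lemma_aux_WD_0} holding for all $(u^*,v^*,\gamma)\in\dom g^c$. Taking the infimum over $\dom g^c$ and then the supremum over $\RTD_+$ gives $v\eqref{eq:Dual_problem_2}\geq -\beta$, i.e. $\beta\geq -v\eqref{eq:Dual_problem_2}$. That establishes \eqref{eq:incl_kprime}.

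For the ``in particular'' part, assume \eqref{eq:Dual_problem_2} is solvable, so the supremum defining $v\eqref{eq:Dual_problem_2}$ is attained at some $\bar\lambda\in\RTD_+$: $\inf_{\dom g^c}\{g^c-(f+\bar\lambda h)^c\}=v\eqref{eq:Dual_problem_2}$. Then for $\beta=-v\eqref{eq:Dual_problem_2}$ the inequality \eqref{eq:Lemma_aux_WD_0} holds for $\bar\lambda$ and all $(u^*,v^*,\gamma)\in\dom g^c$, so $(0,0,0,-v\eqref{eq:Dual_problem_2})\in K$ and hence $(0,0,\delta,-v\eqref{eq:Dual_problem_2})\in K'$ for every $\delta>0$. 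Combined with the monotonicity in $\beta$ already implicit in \eqref{eq:incl_kprime} (if $(0,0,0,\beta)\in K$ then $(0,0,0,\beta')\in K$ for every $\beta'\geq\beta$, which is immediate from the characterization in Lemma~\ref{lem:Lemma_aux} since enlarging $\beta$ only weakens \eqref{eq:Lemma_aux_WD_0}), both inclusions in \eqref{eq:incl_kprime} become equalities, giving \eqref{eq:equa_kprime}. Finally, the set on the right-hand side of \eqref{eq:equa_kprime} is, reasoning exactly as in Lemma~\ref{lem:epi(f-g+dA}, the epigraph of the $\ep$-convex function $\sup_X\{c'(\cdot,x)-v\eqref{eq:Dual_problem_2}\}$ (a pointwise supremum of $c'$-elementary functions), so $K'$ is $\ep$-convex.

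The main obstacle I anticipate is the careful handling of the quantifier interchange in the second inclusion: a single $\lambda$ satisfying \eqref{eq:Lemma_aux_WD_0} for all $(u^*,v^*,\gamma)$ only gives a lower bound $-\beta$ for the inner infimum at that $\lambda$, and one must then legitimately pass to the supremum over $\RTD_+$ to conclude $\beta\geq -v\eqref{eq:Dual_problem_2}$ — routine, but the place where sign conventions and the priority of $-\infty$ in the $c$-conjugation scheme need a moment's attention. The rest is bookkeeping with the definitions \eqref{eq:Set_Kp}, \eqref{eq:Set_Kpp} and a reuse of the $\ep$-convexity argument from Lemma~\ref{lem:epi(f-g+dA}.
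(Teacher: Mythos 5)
Your proof is correct and follows essentially the same route as the paper: both inclusions are obtained by translating membership in $K'$ through Lemma~\ref{lem:Lemma_aux} and comparing $-\beta$ with the sup--inf defining $v\eqref{eq:Dual_problem_2}$, and the solvable case and the $\ep$-convexity argument (reusing the $\bigcap_X\epi c^\prime(\cdot,x)-v\eqref{eq:Dual_problem_2}$ identification from Lemma~\ref{lem:epi(f-g+dA}) match the paper's. Your extra care with the quantifier order and the monotonicity-in-$\beta$ remark are fine refinements of the same argument.
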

\begin{proof}
Take a point $(0,0,\delta,\beta)$ in $W \times \R$ with $\delta >0$ and $\beta>-v\eqref{eq:Dual_problem_2}$. Since $v\eqref{eq:Dual_problem_2}> -\beta$, there would exist $\lambda \in \RTD_+$ such that, for all $(u^*,v^*,\gamma)\in\dom g^c$,
\begin{equation*}
	g^c(u^*,v^*,\gamma)-(f+\lambda h)^c(u^*,v^*,\gamma)\geq -\beta,
\end{equation*}
and, according to Lemma \ref{lem:Lemma_aux}, $(0,0,0, \beta) \in K$ and therefore $(0,0,\delta,\beta) \in K'.$ 
The second inclusion in \eqref{eq:incl_kprime}  also comes directly from Lemma  \ref{lem:Lemma_aux}.\\
Now, let us suppose that $v\eqref{eq:Dual_problem_2}$ is attainable at $\bar \lambda$. If $\beta\geq -v\eqref{eq:Dual_problem_2}$, we have
$$g^c(u^*,v^*,\gamma)-(f+\bar \lambda h)^c(u^*,v^*,\gamma)\geq -\beta$$
for all $(u^*,v^*,\gamma)\in\dom g^c$ and, according to Lemma \ref{lem:Lemma_aux}, $(0,0,0,\beta) \in K$ hence, for all $\delta >0$, $(0,0,\delta,\beta) \in K'$ and  \eqref{eq:equa_kprime} holds.
It is easy to see, as in the proof of the previous lemma, that, 
$$\left\{ (0,0,\delta,\beta)\,:\delta >0,\beta\geq -v\eqref{eq:Dual_problem_2}\right\}= \bigcap_{X}\epi c^\prime(\cdot,x)-v\eqref{eq:Dual_problem_2}.$$
\end{proof}


\begin{proposition}
\label{prop:WD_DL}
Let $A$, $K'$ and $B$ be the sets defined in \eqref{eq:Set_A}, \eqref{eq:Set_Kpp} and \eqref{eq:Set_B}, respectively. Then weak duality holds for \eqref{eq:Primal_problem}$-$\eqref{eq:Dual_problem_2} if and only if
\begin{equation}
	\label{eq:WD_DL_Conte}
	K'\subseteq \epi(f-g+\delta_A)^c\cap B.
\end{equation}
\end{proposition}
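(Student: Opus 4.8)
The plan is to translate the inclusion \eqref{eq:WD_DL_Conte} into the inequality $v\eqref{eq:Primal_problem}\geq v\eqref{eq:Dual_problem_2}$ by unwinding the definitions of the three sets and using the characterizations already available. First I would dispose of the trivial cases: if $v\eqref{eq:Dual_problem_2}=-\infty$ there is nothing to prove (weak duality is automatic, and $K'$ may or may not be empty—if it is empty the inclusion holds vacuously, and if $v\eqref{eq:Dual_problem_2}$ is not a real number one should check directly that both sides behave correctly), while if $v\eqref{eq:Primal_problem}=-\infty$ one needs to argue separately; the substantive content is when at least one of the two values is finite, and the cleanest route is to split according to whether $v\eqref{eq:Dual_problem_2}\in\R$.

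For the main case $v\eqref{eq:Dual_problem_2}\in\R$, I would argue both implications through the description of $K'$ coming from Lemma \ref{lem: kprime between}. For the ``only if'' direction: assume weak duality, i.e. $v\eqref{eq:Primal_problem}\geq v\eqref{eq:Dual_problem_2}$. Take any $(0,0,\delta,\beta)\in K'$; by Lemma \ref{lem:Lemma_aux} and the definition \eqref{eq:Set_Kpp} this means $(0,0,0,\beta)\in K$, so there is $\lambda\in\RTD_+$ with $g^c(u^*,v^*,\gamma)-(f+\lambda h)^c(u^*,v^*,\gamma)\geq-\beta$ for all $(u^*,v^*,\gamma)\in\dom g^c$; taking the infimum over $\dom g^c$ and the supremum over $\lambda$ gives $v\eqref{eq:Dual_problem_2}\geq-\beta$, hence $\beta\geq-v\eqref{eq:Dual_problem_2}\geq-v\eqref{eq:Primal_problem}$. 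Since also $x^*=y^*=0$ and $\delta>0$, Lemma \ref{lem:Lemma_FV23} (together with Remark \ref{R1}) yields $(0,0,\delta,\beta)\in\epi(f-g+\delta_A)^c$, and membership in $B$ is immediate from \eqref{eq:Set_B}; this proves \eqref{eq:WD_DL_Conte}. For the ``if'' direction: assume \eqref{eq:WD_DL_Conte}. By the first inclusion in \eqref{eq:incl_kprime}, every point $(0,0,\delta,\beta)$ with $\delta>0$ and $\beta>-v\eqref{eq:Dual_problem_2}$ lies in $K'$, hence in $\epi(f-g+\delta_A)^c\cap B$, so Lemma \ref{lem:Lemma_FV23} gives $v\eqref{eq:Primal_problem}\geq-\beta$ for every such $\beta$; letting $\beta\downarrow-v\eqref{eq:Dual_problem_2}$ yields $v\eqref{eq:Primal_problem}\geq v\eqref{eq:Dual_problem_2}$, which is weak duality.

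It remains to handle the boundary cases compatibly with the statement. If $v\eqref{eq:Dual_problem_2}=-\infty$, then weak duality holds trivially, and one must check the right-hand side of the equivalence is also true: here the second inclusion of Lemma \ref{lem:Lemma_aux} argument forces $K'=\emptyset$ (no $\beta$ can satisfy $-\beta\leq v\eqref{eq:Dual_problem_2}=-\infty$ via the infimum/supremum chain), so \eqref{eq:WD_DL_Conte} holds vacuously. If $v\eqref{eq:Dual_problem_2}=+\infty$, which forces $v\eqref{eq:Primal_problem}=+\infty$ as well once $\dom f\subseteq\dom g$ is taken into account—or in any case one checks directly—$K'$ is all of $\{(0,0,\delta,\beta):\delta>0,\beta\in\R\}$ and weak duality $v\eqref{eq:Primal_problem}\geq v\eqref{eq:Dual_problem_2}$ holds iff $v\eqref{eq:Primal_problem}=+\infty$ iff $\epi(f-g+\delta_A)^c$ contains all points $(0,0,\delta,\beta)$, which is exactly \eqref{eq:WD_DL_Conte}.

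The main obstacle I anticipate is purely bookkeeping: making the passage between ``there exists $\lambda$ with the inequality for all $(u^*,v^*,\gamma)$'' and the scalar inequality $v\eqref{eq:Dual_problem_2}\geq-\beta$ fully rigorous when suprema/infima are not attained, and handling the $\pm\infty$ endpoint values of $v\eqref{eq:Dual_problem_2}$ and $v\eqref{eq:Primal_problem}$ so that the stated equivalence holds without any finiteness hypothesis. Everything else is a direct application of Lemma \ref{lem:Lemma_aux}, Lemma \ref{lem:Lemma_FV23}, Remark \ref{R1}, and the two-sided description of $K'$ in Lemma \ref{lem: kprime between}; no genuinely new estimate is needed.
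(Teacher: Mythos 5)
Your proposal is correct and follows essentially the same route as the paper: both directions reduce to the descriptions of $K'$ and of $\epi(f-g+\delta_A)^c\cap B$ as the slabs $\{(0,0,\delta,\beta):\delta>0,\ \beta\geq -v(\overline{D}_L)\}$ (up to the boundary) and $\{(0,0,\delta,\beta):\delta>0,\ \beta\geq -v(P)\}$, via Lemmas \ref{lem:Lemma_aux}, \ref{lem: kprime between}, \ref{lem:Lemma_FV23} and \ref{lem:epi(f-g+dA}. The only difference is organizational — you case-split on $v(\overline{D}_L)$ rather than $v(P)$ and treat the $\pm\infty$ endpoints somewhat more explicitly than the published proof does — but no new idea is involved.
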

\begin{proof}
Let us assume that $v\eqref{eq:Primal_problem}\geq v\eqref{eq:Dual_problem_2}.$ If $v\eqref{eq:Primal_problem}=-\infty$, then by Lemma \ref{lem:Lemma_FV23}, $\epi(f-g+\delta_A)^c\cap B =\emptyset$. On the other hand, because $v\eqref{eq:Dual_problem_2}=-\infty$ too, by Lemma \ref{lem:Lemma_aux}, $K'=\emptyset$.\\
Now, suppose that $v\eqref{eq:Primal_problem} \in \R$. Weak duality implies that also $v\eqref{eq:Dual_problem_2} \in \R$ and  the inclusion \eqref{eq:WD_DL_Conte} is a direct consequence from Lemmas  \ref{lem: kprime between}    and \ref{lem:epi(f-g+dA}.\\
Now we assume that \eqref{eq:WD_DL_Conte} holds. If $v\eqref{eq:Primal_problem}<v\eqref{eq:Dual_problem_2}$, we can take $\beta\in\R$ in such a way that
\begin{equation}
	\label{eq:Chain_Prop_WD}
	v\eqref{eq:Primal_problem}<-\beta<v\eqref{eq:Dual_problem_2}.
\end{equation}
Since $\beta>-v\eqref{eq:Dual_problem_2}$, by Lemma \ref{lem: kprime between}, for all $\delta >0$, $(0,0,\delta, \beta) \in K'$ and therefore, by \eqref{eq:WD_DL_Conte}, $(0,0,\delta, \beta) \in \epi(f-g+\delta_A)^c\cap B$, which is not possible, in virtue of Lemma \ref{lem:epi(f-g+dA}.
\end{proof}

\begin{proposition}
\label{prop:Zero_DG}
Let $A$, $K'$ and $B$ be the sets defined in \eqref{eq:Set_A}, \eqref{eq:Set_Kpp} and \eqref{eq:Set_B}, respectively. Then there exists zero duality gap for \eqref{eq:Primal_problem}$-$\eqref{eq:Dual_problem_2} if and only if
\begin{equation}
	\label{eq:Assertion_Prop_ZDG}
	\epco K' = \epi(f-g+\delta_A)^c\cap B.
\end{equation}
\end{proposition}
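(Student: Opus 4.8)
The plan is to reduce the statement to the two ``sandwich'' lemmas already established, exactly as in the proof of Proposition~\ref{prop:WD_DL}, and then to upgrade the inclusions to an equality of $\ep$-convex hulls. Recall that zero duality gap for \eqref{eq:Primal_problem}$-$\eqref{eq:Dual_problem_2} means $v\eqref{eq:Primal_problem}=v\eqref{eq:Dual_problem_2}$. First I would dispose of the infinite cases: if this common value is $-\infty$, then Lemma~\ref{lem:Lemma_FV23} gives $\epi(f-g+\delta_A)^c\cap B=\emptyset$ and Lemma~\ref{lem:Lemma_aux} gives $K'=\emptyset$, hence $\epco K'=\emptyset$ and \eqref{eq:Assertion_Prop_ZDG} holds trivially; the value $+\infty$ case is handled symmetrically (here weak duality and Proposition~\ref{prop:WD_DL} force both sets to coincide with the full $\{(0,0)\}\times\R_{++}\times\R$ slice on which the relevant conjugates are finite, or one checks directly both sets equal $\emptyset$ or the whole admissible slice). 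So the substantive case is $v\eqref{eq:Primal_problem}=v\eqref{eq:Dual_problem_2}=:\mu\in\R$.

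\textbf{The real case.}
Assume first zero duality gap, so $\mu\in\R$. By Lemma~\ref{lem: kprime between},
\[
\{(0,0,\delta,\beta):\delta>0,\ \beta>-\mu\}\subseteq K'\subseteq\{(0,0,\delta,\beta):\delta>0,\ \beta\geq-\mu\},
\]
and by Lemma~\ref{lem:epi(f-g+dA} the right-hand side $\{(0,0,\delta,\beta):\delta>0,\ \beta\geq-\mu\}$ equals $\epi(f-g+\delta_A)^c\cap B$ and is $\ep$-convex. Since the $\ep$-convex hull is monotone and idempotent, applying $\epco$ to the displayed chain yields
\[
\epco\{(0,0,\delta,\beta):\delta>0,\ \beta>-\mu\}\subseteq\epco K'\subseteq\epi(f-g+\delta_A)^c\cap B.
\]
The point now is that $\{(0,0,\delta,\beta):\delta>0,\ \beta>-\mu\}$ has $\ep$-convex hull equal to $\{(0,0,\delta,\beta):\delta>0,\ \beta\geq-\mu\}$ — this is the same phenomenon as in Lemma~\ref{lem:epi(f-g+dA}: the latter set is $\bigcap_X\epi c'(\cdot,x)-\mu$, which is $\ep$-convex, contains the former, and is its smallest $\ep$-convex superset because any $\ep$-convex set containing all $(0,0,\delta,\beta)$ with $\beta>-\mu$ must, being an epigraph of an $\ep$-convex (hence convex, hence having closed-along-the-$\beta$-axis epigraph) function, also contain the limiting points $(0,0,\delta,-\mu)$. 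Hence $\epco K'=\epi(f-g+\delta_A)^c\cap B$.

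\textbf{Converse and the main obstacle.}
Conversely, assume \eqref{eq:Assertion_Prop_ZDG} holds. Then $K'\subseteq\epco K'=\epi(f-g+\delta_A)^c\cap B$, which by Proposition~\ref{prop:WD_DL} gives weak duality $v\eqref{eq:Primal_problem}\geq v\eqref{eq:Dual_problem_2}$. For the reverse inequality, suppose $v\eqref{eq:Primal_problem}>v\eqref{eq:Dual_problem_2}$; in particular $v\eqref{eq:Dual_problem_2}<+\infty$. If $v\eqref{eq:Dual_problem_2}=-\infty$ then $K'=\emptyset$, so $\epi(f-g+\delta_A)^c\cap B=\epco\emptyset=\emptyset$, forcing $v\eqref{eq:Primal_problem}=-\infty$ by Lemma~\ref{lem:Lemma_FV23}, a contradiction. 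So $v\eqref{eq:Dual_problem_2}=:\mu\in\R$, and by Lemmas~\ref{lem: kprime between} and~\ref{lem:epi(f-g+dA} together with the hull computation above, $\epi(f-g+\delta_A)^c\cap B=\{(0,0,\delta,\beta):\delta>0,\ \beta\geq-\mu\}$; but this set, again by Lemma~\ref{lem:epi(f-g+dA}, equals $\{(0,0,\delta,\beta):\delta>0,\ \beta\geq-v\eqref{eq:Primal_problem}\}$, whence $-\mu=-v\eqref{eq:Primal_problem}$, i.e. $v\eqref{eq:Dual_problem_2}=v\eqref{eq:Primal_problem}$, contradicting the strict inequality. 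The main obstacle I anticipate is the hull computation, i.e. checking carefully that $\epco\{(0,0,\delta,\beta):\delta>0,\ \beta>-\mu\}=\{(0,0,\delta,\beta):\delta>0,\ \beta\geq-\mu\}$ rather than something larger escaping into $X^*\times X^*\times\R\times\R$; this needs the explicit description of $\ep$-convex sets as epigraphs of $\ep$-convex functions and the intersection-of-$c'$-elementary-functions representation, exactly the toolkit used in Lemmas~\ref{lem:epi(f-g+dA} and~\ref{lem: kprime between}.
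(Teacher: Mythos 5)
Your proof is correct and follows essentially the same route as the paper's: the same sandwich via Lemmas \ref{lem: kprime between} and \ref{lem:epi(f-g+dA}, the same crux identity $\epco\{(0,0,\delta,\beta):\delta>0,\beta>-\mu\}=\{(0,0,\delta,\beta):\delta>0,\beta\geq-\mu\}$, and the same use of Proposition \ref{prop:WD_DL} plus Lemma \ref{lem: kprime between} for the converse. The only (harmless) variation is that you justify the hull identity by the vertical-closedness of epigraphs, whereas the paper computes it explicitly from the representation $H=\sup_D c'(\cdot,x)-\alpha$ with $D\subseteq X\times[v(P),+\infty[$; both arguments are valid.
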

\begin{proof}
Suppose that $v\eqref{eq:Primal_problem}=v\eqref{eq:Dual_problem_2} \in \R$ (otherwise,  $\epi(f-g+\delta_A)^c\cap B =K'=\emptyset$ and $\emptyset$ is $\ep$-convex, since it is the epigraph of the $\ep$-convex function $S= \sup_{X \times \R}c'(\cdot,x)-\alpha)$.\\
In first place, we will see that
\begin{equation}
\label{eq:epcoequality}
\epco \left\{ (0,0,\delta,\beta)\,:\delta >0,\beta>-v\eqref{eq:Primal_problem}\right\}=\left\{ (0,0,\delta,\beta)\,:\delta >0,\beta\geq -v\eqref{eq:Primal_problem}\right\}.
\end{equation}
Since, according to Lemma \ref{lem:epi(f-g+dA}, the set $\left\{ (0,0,\delta,\beta)\,:\delta >0,\beta\geq -v\eqref{eq:Primal_problem}\right\}$ is $\ep$-convex, it is clear that
$$\epco \left\{ (0,0,\delta,\beta)\,:\delta >0,\beta>-v\eqref{eq:Primal_problem}\right\}\subseteq \left\{ (0,0,\delta,\beta)\,:\delta >0,\beta\geq -v\eqref{eq:Primal_problem}\right\}.$$
Let us denote by $H:W \rightarrow \R$ the $\ep$-convex function whose epigraph is $\epco \left\{ (0,0,\delta,\beta)\,:\delta >0,\beta>-v\eqref{eq:Primal_problem} \right\rbrace$. Let us assume that
$$H=\sup_D c' (\cdot, x)-\alpha,$$ where $D\subseteq X \times \R$.
Then, for all $\delta >0$ and $\beta>-v\eqref{eq:Primal_problem}$, for all $(x,\alpha) \in D$, we have
\begin{equation*}
	c^\prime((0,0,\delta),x)-\alpha \leq \beta,
\end{equation*}
or, equivalently, 
\begin{equation}
	\label{eq:Contradiction_ZDG}
	-\alpha \leq \beta,
\end{equation}
for all $\beta>-v\eqref{eq:Primal_problem}$, which means that $\alpha \geq v\eqref{eq:Primal_problem}$, and $D$ is any nonempty subset of $X \times \left[v(P), + \infty \right[.$
Let us show that
$$ \left\{ (0,0,\delta,\beta)\,:\delta >0,\beta\geq -v\eqref{eq:Primal_problem}\right\}\subseteq\epco \left\{ (0,0,\delta,\beta)\,:\delta >0,\beta>-v\eqref{eq:Primal_problem}\right\}.$$
For all $(x,\alpha) \in D$, if $\delta >0$ and $\beta \geq -v(P)$,
$$c'((0,0,\delta),x)-\alpha=-\alpha \leq -v(P)\leq \beta$$
and hence $(0,0,\delta,\beta) \in \epi c'(\cdot, x)-\alpha$, therefore
$$ \left\{ (0,0,\delta,\beta)\,:\delta >0,\beta\geq -v\eqref{eq:Primal_problem}\right\}\subseteq \bigcap_D \epi c'(\cdot, x)-\alpha=\epi H$$
and \eqref{eq:epcoequality} fulfills.\\
Now, let us prove equality \eqref{eq:Assertion_Prop_ZDG} under the hypothesis of zero duality gap. According to Lemma \ref{lem: kprime between}, we will have
$$\left\{ (0,0,\delta,\beta)\,:\delta >0,\beta>-v\eqref{eq:Primal_problem}\right\} \subseteq K' \subseteq \left\{ (0,0,\delta,\beta)\,:\delta >0,\beta\geq -v\eqref{eq:Primal_problem}\right\}.$$
In virtue of Lemma \ref{lem:epi(f-g+dA} and the definition of e$^\prime$-convex hull, we will have
$$\epco \left\{ (0,0,\delta,\beta)\,:\delta >0,\beta>-v\eqref{eq:Primal_problem}\right\} \subseteq \epco K' \subseteq \epi(f-g+\delta_A)^c\cap B.$$
According to equality \eqref{eq:epcoequality}, we obtain \eqref{eq:Assertion_Prop_ZDG}.\\ 
Now, let us suppose that \eqref{eq:Assertion_Prop_ZDG} is true. Applying Proposition \ref{prop:WD_DL}, we have $v\eqref{eq:Primal_problem}\geq v\eqref{eq:Dual_problem_2}$. If $v(P)=-\infty$, necessarily $v\eqref{eq:Dual_problem_2}=-\infty$, then suppose that $v\eqref{eq:Primal_problem}\in \R$. Therefore $v\eqref{eq:Dual_problem_2} \in \R$ too.
Now, by Lemma \ref{lem:epi(f-g+dA} and \eqref{eq:Assertion_Prop_ZDG}, we have
$$\epco K' = \left\{ (0,0,\delta,\beta)\,:\delta >0,\beta\geq -v\eqref{eq:Primal_problem}\right \},$$
and by Lemma \ref{lem: kprime between}, taking e'convex hull, we obtain
$$\epco K' \subseteq \left\{(0,0,\delta,\beta): \delta>0,\, \beta\geq -v\eqref{eq:Dual_problem_2}\right\},$$
hence
\begin{equation}
\label{eq:Chain_prop_ZDG_RtoL}
\left\{(0,0,\delta,\beta): \delta>0,\, \beta\geq -v\eqref{eq:Primal_problem}\right\} \subseteq \left\{(0,0,\delta,\beta): \delta>0,\, \beta\geq -v\eqref{eq:Dual_problem_2}\right\}
\end{equation} 
and $v(P)\leq v\eqref{eq:Dual_problem_2}$. 
\end{proof}

\begin{proposition}
\label{prop:SD_DL}
Let $A$, $K'$ and $B$ be the sets defined in \eqref{eq:Set_A}, \eqref{eq:Set_Kpp} and \eqref{eq:Set_B}, respectively. Then strong duality holds for \eqref{eq:Primal_problem}$-$\eqref{eq:Dual_problem_2} if and only if
\begin{equation}
\label{eq:SD}
	K' = \epi(f-g+\delta_A)^c\cap B.
\end{equation}
\end{proposition}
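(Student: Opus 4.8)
The plan is to derive the strong duality characterization by combining the zero-duality-gap criterion from Proposition \ref{prop:Zero_DG} with the extra information provided by solvability of the dual. Recall that strong duality for \eqref{eq:Primal_problem}$-$\eqref{eq:Dual_problem_2} means exactly that $v\eqref{eq:Primal_problem}=v\eqref{eq:Dual_problem_2}\in\R$ (or both equal $-\infty$) \emph{and} \eqref{eq:Dual_problem_2} is solvable. So one direction is almost immediate: assuming strong duality, Proposition \ref{prop:Zero_DG} gives $\epco K'=\epi(f-g+\delta_A)^c\cap B$, while solvability of \eqref{eq:Dual_problem_2} together with Lemma \ref{lem: kprime between} gives the explicit description $K'=\{(0,0,\delta,\beta):\delta>0,\ \beta\geq -v\eqref{eq:Dual_problem_2}\}$, and this set is already $\ep$-convex, hence equals its own $\ep$-convex hull. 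Combining these two equalities with $v\eqref{eq:Primal_problem}=v\eqref{eq:Dual_problem_2}$ yields \eqref{eq:SD}. (The degenerate case $v\eqref{eq:Primal_problem}=-\infty$ forces both sets to be empty, handled as in the previous proofs.)

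For the converse, assume \eqref{eq:SD}. First, since $\epco K' = \epi(f-g+\delta_A)^c\cap B$ would follow from \eqref{eq:SD} (the right-hand side of \eqref{eq:SD} is $\ep$-convex by Lemma \ref{lem:epi(f-g+dA}, so $K'$ equals its own $\ep$-convex hull), Proposition \ref{prop:Zero_DG} gives zero duality gap, i.e.\ $v\eqref{eq:Primal_problem}=v\eqref{eq:Dual_problem_2}$. If this common value is $-\infty$ there is nothing to prove about solvability in the usual convention; so assume $v\eqref{eq:Primal_problem}\in\R$. Then by Lemma \ref{lem:epi(f-g+dA}, the right-hand side of \eqref{eq:SD} equals $\{(0,0,\delta,\beta):\delta>0,\ \beta\geq -v\eqref{eq:Primal_problem}\}$, so \eqref{eq:SD} tells us $K'=\{(0,0,\delta,\beta):\delta>0,\ \beta\geq -v\eqref{eq:Primal_problem}\}$. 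In particular, picking any $\delta>0$ and $\beta=-v\eqref{eq:Primal_problem}$, we get $(0,0,\delta,-v\eqref{eq:Primal_problem})\in K'$, hence $(0,0,0,-v\eqref{eq:Primal_problem})\in K$ by the definition \eqref{eq:Set_Kpp} of $K'$.

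Now I would invoke Lemma \ref{lem:Lemma_aux} with $\beta=-v\eqref{eq:Primal_problem}$: the membership $(0,0,0,-v\eqref{eq:Primal_problem})\in K$ yields a particular $\bar\lambda\in\RTD_+$ such that, for all $(u^*,v^*,\gamma)\in\dom g^c$,
\begin{equation*}
	g^c(u^*,v^*,\gamma)-(f+\bar\lambda h)^c(u^*,v^*,\gamma)\geq v\eqref{eq:Primal_problem}.
\end{equation*}
Taking the infimum over $\dom g^c$ and then noting that $\bar\lambda$ is a feasible point of \eqref{eq:Dual_problem_2}, this shows $v\eqref{eq:Dual_problem_2}\geq v\eqref{eq:Primal_problem}$, so $\bar\lambda$ attains the optimal value of \eqref{eq:Dual_problem_2}; that is, \eqref{eq:Dual_problem_2} is solvable. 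Together with zero duality gap already established, this is precisely strong duality.

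The main obstacle, I expect, is keeping the several degenerate/infinite-value cases aligned with the conventions used in the preceding propositions — in particular making sure the statement ``strong duality'' is interpreted consistently (zero gap plus dual solvability) and that the $v\eqref{eq:Primal_problem}=-\infty$ case is dispatched the same way as in Propositions \ref{prop:WD_DL} and \ref{prop:Zero_DG}, where the relevant sets are simply empty. The rest is bookkeeping: the key non-trivial ingredient is really Lemma \ref{lem:Lemma_aux}, which converts the geometric statement ``$(0,0,0,\beta)\in K$'' into the existence of a dual feasible $\bar\lambda$ witnessing the bound, and Lemma \ref{lem:epi(f-g+dA}, which pins down the right-hand side of \eqref{eq:SD} as an explicit half-line set once $v\eqref{eq:Primal_problem}$ is finite.
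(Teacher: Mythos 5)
Your proposal is correct and follows essentially the same route as the paper's proof: the forward direction combines Proposition \ref{prop:Zero_DG} with the $\ep$-convexity of $K'$ from Lemma \ref{lem: kprime between} under dual solvability, and the converse uses the $\ep$-convexity of the right-hand side to invoke Proposition \ref{prop:Zero_DG} for zero gap, then extracts the optimal $\bar\lambda$ from $(0,0,0,-v\eqref{eq:Primal_problem})\in K$ via Lemma \ref{lem:Lemma_aux}. The only cosmetic difference is that you pin down $\epi(f-g+\delta_A)^c\cap B$ through Lemma \ref{lem:epi(f-g+dA} where the paper cites Lemma \ref{lem:Lemma_FV23} directly; these are interchangeable.
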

\begin{proof}
Let us assume that there exists strong duality between $(P)$ and \eqref{eq:Dual_problem_2} and both optimal values are finite (the infinite case is discussed in Proposition \ref{prop:Zero_DG}). Acoording to Lemma \ref{lem: kprime between}, $K'$ is $\ep$-convex, and from Proposition \ref{prop:Zero_DG}, we obtain \eqref{eq:SD}.\\
Let us check the converse implication. If $K' = \epi(f-g+\delta_A)^c\cap B$, it implies in particular that $K'$ is an e$^\prime$-convex set, then by  Proposition \ref{prop:Zero_DG}, $v\eqref{eq:Dual_problem_2}= v(P)$. In case $v(P)=-\infty$, strong duality holds trivially, so let us assume that $v(P)=-\beta\in\R$. \\
By Lemma \ref{lem:Lemma_FV23}, $(0,0,\delta,\beta) \in \epi(f-g+\delta_A)^c \cap B$, for any $\delta >0$, and due to the equality in the hypothesis, $(0,0,0,\beta) \in K$. Then according to Lemma \ref{lem:Lemma_aux}, there exists $\lambda_0\in\RTD_+$ such that for all $(u^*,v^*,\gamma)\in\dom g^c$,
\begin{equation*}
	v\eqref{eq:Dual_problem_2} \geq g^c(u^*,v^*,\gamma)-(f+\lambda_0 h)^c(u^*,v^*,\gamma)\geq -\beta=v\eqref{eq:Dual_problem_2},
\end{equation*}
i.e., \ref{eq:Dual_problem_2} is solvable and there exists strong duality for the dual pair $(P)-$(\ref{eq:Dual_problem_2}).
\end{proof}

\section{Zero duality Gap and Strong duality for \eqref{eq:Primal_problem}$-$\eqref{eq:Dual_problem_1}}
\label{sec:Comp_DL1}
Characterizations for zero duality gap and strong duality for \eqref{eq:Primal_problem}$-$\eqref{eq:Dual_problem_1} can be obtained by means of the sets 
\begin{align}
	\Omega&:= \bigcup_{\RTD_+}\bigcap_{\dom g} \left[ (g-f-\lambda h)(x), +\infty\right[ \subseteq \R \nonumber,\\
		\label{eq:Set Omegapp}
	\Omega'&:=\left\{ (0,0,\delta,\beta)\,:\,\delta >0, \beta\in \Omega\right\}\subseteq W \times \R,
\end{align}
with similar results to those in Section \ref{sec:Cond_WD} for \eqref{eq:Primal_problem}$-$\eqref{eq:Dual_problem_2}. The following proposition has analogous proofs than Propositions \ref{prop:Zero_DG} and \ref{prop:SD_DL}. The proof of $i)$ comes directly from the fulfilment of weak duality for $(P)-(D_L)$.

\begin{proposition}
\label{prop:Zero_DG2}
Let $A$, $\Omega'$ and $B$ be the sets defined in \eqref{eq:Set_A}, \eqref{eq:Set Omegapp} and \eqref{eq:Set_B}, respectively. The following statements hold
\begin{itemize}
	\item[i)] $\Omega'\subseteq \epi(f+g-\delta_A)^c \cap B$.
	\item[ii)] There exists zero duality gap for $(P)-(D_L)$ if and only if 
	$$\epco \Omega' = \epi(f-g+\delta_A)^c\cap B.$$
	\item[iii)] There exists strong duality for $(P)-(D_L)$ if and only if 
	$$\Omega'
	= \epi(f-g+\delta_A)^c\cap B.$$
\end{itemize}
\end{proposition}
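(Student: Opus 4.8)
The statement to be proved is Proposition \ref{prop:Zero_DG2}, which has three parts concerning the pair \eqref{eq:Primal_problem}$-$\eqref{eq:Dual_problem_1}. The plan is to mirror, in the language of $\Omega$ and $\Omega'$, the development carried out in Section \ref{sec:Cond_WD} for $K$ and $K'$, replacing $c$-conjugate epigraphs on the dual side by the simpler one-dimensional slices that appear because $\eqref{eq:Dual_problem_1}$ is the \emph{standard} Lagrange dual, formulated directly via $\inf_X\{f-g+\lambda h\}$ rather than via $c$-conjugates. First I would record the analogue of Lemma \ref{lem:Lemma_aux}: a point $(0,0,0,\beta)$ belongs to $\bigcup_{\RTD_+}\bigcap_{\dom g}[(g-f-\lambda h)(x),+\infty[\,$ exactly when there is $\lambda\in\RTD_+$ with $f(x)-g(x)+\lambda h(x)\geq -\beta$ for all $x\in\dom g$, which---since $f-g+\lambda h = +\infty$ off $\dom f\subseteq\dom g$---is the same as $\inf_X\{f-g+\lambda h\}\geq -\beta$, i.e. $v\eqref{eq:Dual_problem_1}\geq -\beta$. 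This immediately gives, parallel to Lemma \ref{lem: kprime between}, that
\begin{equation*}
\left\{(0,0,\delta,\beta):\delta>0,\ \beta>-v\eqref{eq:Dual_problem_1}\right\}\subseteq \Omega'\subseteq\left\{(0,0,\delta,\beta):\delta>0,\ \beta\geq -v\eqref{eq:Dual_problem_1}\right\},
\end{equation*}
with equality on the right when $\eqref{eq:Dual_problem_1}$ is solvable.

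For part $i)$, since weak duality $v\eqref{eq:Primal_problem}\geq v\eqref{eq:Dual_problem_1}$ holds by construction of the standard Lagrange dual (as recalled after \eqref{eq:Dual_problem_1}), every $\beta\in\Omega$ satisfies $-\beta\leq v\eqref{eq:Dual_problem_1}\leq v\eqref{eq:Primal_problem}$, hence $\beta\geq -v\eqref{eq:Primal_problem}$; combining with Lemma \ref{lem:Lemma_FV23} and Remark \ref{R1} (which identify $\{(0,0,\delta,\beta):\delta>0,\ \beta\geq -v\eqref{eq:Primal_problem}\}$ with $\epi(f-g+\delta_A)^c\cap B$ when $v(P)\in\R$, and both sides with $\emptyset$ when $v(P)=-\infty$) yields the inclusion $\Omega'\subseteq\epi(f-g+\delta_A)^c\cap B$. (I would keep the statement's typo-free form $\epi(f-g+\delta_A)^c$; the `$f+g$' in the displayed item $i)$ appears to be a misprint.) For parts $ii)$ and $iii)$ I would invoke the auxiliary identity, proved verbatim as in Proposition \ref{prop:Zero_DG},
\begin{equation*}
\epco\left\{(0,0,\delta,\beta):\delta>0,\ \beta>-v\eqref{eq:Primal_problem}\right\}=\left\{(0,0,\delta,\beta):\delta>0,\ \beta\geq -v\eqref{eq:Primal_problem}\right\},
\end{equation*}
together with Lemma \ref{lem:epi(f-g+dA} (which shows the right-hand set is $\ep$-convex and equals $\epi(f-g+\delta_A)^c\cap B$). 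Then zero duality gap $v\eqref{eq:Primal_problem}=v\eqref{eq:Dual_problem_1}$ sandwiches $\Omega'$ between $\{\beta>-v(P)\}$ and $\{\beta\geq -v(P)\}$, so taking $\ep$-convex hulls gives $\epco\Omega'=\epi(f-g+\delta_A)^c\cap B$; conversely this equality forces $v(P)\geq v\eqref{eq:Dual_problem_1}$ (via the analogue of Proposition \ref{prop:WD_DL}, itself immediate here since weak duality is automatic) and, by the slice description, $v(P)\leq v\eqref{eq:Dual_problem_1}$, giving $ii)$. For $iii)$, if strong duality holds then $\eqref{eq:Dual_problem_1}$ is solvable so the right inclusion above is an equality, $\Omega'$ is already $\ep$-convex, and $ii)$ upgrades to $\Omega'=\epi(f-g+\delta_A)^c\cap B$; conversely, this equality makes $\Omega'$ $\ep$-convex, hence $v(P)=v\eqref{eq:Dual_problem_1}$ by $ii)$, and if $v(P)=-\beta\in\R$ then Lemma \ref{lem:Lemma_FV23} puts $(0,0,\delta,\beta)$ in $\epi(f-g+\delta_A)^c\cap B=\Omega'$, so by the $\Omega$-analogue of Lemma \ref{lem:Lemma_aux} there is $\lambda_0$ attaining $v\eqref{eq:Dual_problem_1}$, i.e. the dual is solvable (the case $v(P)=-\infty$ being trivial).

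The only genuinely new work is the $\Omega$-version of Lemma \ref{lem:Lemma_aux} and the verification that weak duality $v(P)\geq v\eqref{eq:Dual_problem_1}$, which is cited rather than reproved, plugs correctly into the roles played in Section \ref{sec:Cond_WD} by Propositions \ref{prop:WD_DL}; everything else is a transcription of the arguments for $K'$ with $\epi(f+\lambda h)^c$ slices replaced by the intervals $[(g-f-\lambda h)(x),+\infty[\,$. I expect the main point requiring care to be the reduction ``$\inf_{\dom g}\{f-g+\lambda h\}=\inf_X\{f-g+\lambda h\}$'': this uses the standing convention $f(x)-g(x)=+\infty$ for $x\notin\dom f$ together with $\dom f\subseteq\dom g$ (noted right after \eqref{eq:Dual_problem_2} loading of the paper), so that restricting the infimum to $\dom g$---as the definition of $\Omega$ does---loses nothing; without this one would have to argue separately about points of $\dom g\setminus\dom f$, where $g-f-\lambda h=-\infty$ and the interval $[(g-f-\lambda h)(x),+\infty[$ is all of $\R$, which would actually be harmless but clutters the bookkeeping.
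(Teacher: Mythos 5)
Your proposal is correct and follows exactly the route the paper intends: the paper itself only remarks that the proofs are analogous to Propositions \ref{prop:Zero_DG} and \ref{prop:SD_DL} with part $i)$ following from weak duality for $(P)$--$(D_L)$, and your $\Omega$-analogue of Lemma \ref{lem:Lemma_aux} together with the resulting sandwich for $\Omega'$ is precisely the transcription required. Your reading of the ``$f+g-\delta_A$'' in item $i)$ as a misprint for $f-g+\delta_A$ is also the right one.
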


\subsection{Relation between strong dualities for (P)$-$\eqref{eq:Dual_problem_1} and (P)$-$\eqref{eq:Dual_problem_2}}
This subsection aims to obtain conditions under which both strong dualities are equivalent. The following theorem can be viewed as Lagrange counterpart of Theorem 5.6 from \cite{FV2023}.

\begin{theorem}
\label{thm:SD_mixed}
Let $B$, $\Omega'$, $K'$ and $\Lambda$ the sets defined in \eqref{eq:Set_B}, \eqref{eq:Set Omegapp}, \eqref{eq:Set_Kpp} and \eqref{eq:Set_Lambda}, respectively, and assume that
\begin{equation} \label{eq:problemPe}
	\epi(f-g+\delta_A)^c\cap B = \Lambda\cap B
\end{equation}
with $g$ having a proper e-convex minorant.
 Then the following statements are equivalent
\begin{itemize}
	\item[i)] There exists strong duality for $(P)-(D_L)$;
	\item[ii)] There exists strong duality for $(P)-\left (\Db_L \right )$ and $\Omega'=K'$.
\end{itemize}
\end{theorem}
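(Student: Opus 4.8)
The plan is to reduce the equivalence to the characterizations obtained in Propositions~\ref{prop:SD_DL} and~\ref{prop:Zero_DG2}~(iii) together with the hypothesis \eqref{eq:problemPe}. By Proposition~\ref{prop:SD_DL}, strong duality for $(P)-(\Db_L)$ is equivalent to $K'=\epi(f-g+\delta_A)^c\cap B$, while by Proposition~\ref{prop:Zero_DG2}~(iii), strong duality for $(P)-(D_L)$ is equivalent to $\Omega'=\epi(f-g+\delta_A)^c\cap B$. So both strong-duality statements refer to the \emph{same} right-hand side set, and the task is essentially to show that, under \eqref{eq:problemPe}, $K'$ and $\Omega'$ coincide with $\epi(f-g+\delta_A)^c\cap B$ simultaneously — i.e.\ that strong duality for one of the pairs forces $\Omega'=K'$, and conversely.

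First I would establish a chain of inclusions linking $\Omega'$, $K'$ and $\Lambda\cap B$. For one direction, I expect $\Omega'\subseteq K'$ to hold unconditionally: a real number $\beta$ lying in $\Omega$ means there is $\lambda\in\RTD_+$ with $g(x)-f(x)-\lambda h(x)\le\beta$ on $\dom g$, and passing through $c$-conjugates of $f+\lambda h$ and the identity $g\ge g^{cc'}$ (as in the proof of Lemma~\ref{lem:Lemma_aux_C}~(ii)) should translate this into the condition of Lemma~\ref{lem:Lemma_aux}, placing $(0,0,0,\beta)\in K$ and hence $(0,0,\delta,\beta)\in K'$. For the reverse-type inclusion I would use Lemma~\ref{lem:Lemma_aux_C}: part~(ii) gives $K\supseteq\bigcup_{\RTD_+}\epi(f-\eco g+\lambda h)^c$, and intersecting with $B$ and comparing with $\Lambda=\epi(f-\eco g+\delta_A)^c$ from part~(i), the hypothesis \eqref{eq:problemPe} lets me replace $\epi(f-g+\delta_A)^c\cap B$ by $\Lambda\cap B$ in the chain, which is what couples the two sets.

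Then I would argue the two implications. For (i)$\Rightarrow$(ii): if strong duality holds for $(P)-(D_L)$, then $\Omega'=\epi(f-g+\delta_A)^c\cap B$ by Proposition~\ref{prop:Zero_DG2}~(iii); combined with $\Omega'\subseteq K'\subseteq\epi(f-g+\delta_A)^c\cap B$ (the last inclusion coming from Lemma~\ref{lem: kprime between} via the fact that zero duality gap—hence $v(P)=v(\Db_L)$—already follows, or directly from Proposition~\ref{prop:WD_DL} applied after observing weak duality), all three sets are equal, giving both $K'=\epi(f-g+\delta_A)^c\cap B$ (strong duality for $(P)-(\Db_L)$ by Proposition~\ref{prop:SD_DL}) and $\Omega'=K'$. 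For (ii)$\Rightarrow$(i): strong duality for $(P)-(\Db_L)$ gives $K'=\epi(f-g+\delta_A)^c\cap B$, and $\Omega'=K'$ then yields $\Omega'=\epi(f-g+\delta_A)^c\cap B$, which is strong duality for $(P)-(D_L)$ by Proposition~\ref{prop:Zero_DG2}~(iii).

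The main obstacle I anticipate is pinning down exactly where the hypothesis \eqref{eq:problemPe} is needed and in which direction — in particular, showing the inclusion $K'\subseteq\epi(f-g+\delta_A)^c\cap B$ (equivalently $K'\subseteq\Lambda\cap B$) is not automatic, since $K$ was built from $g^c$ without any even-convexity assumption, and one must route through $\eco g$, using $(\eco g)^c=g^c$ (Theorem~\ref{thm:Theorem_charac}~(ii), already exploited in the proof of Proposition~\ref{prop:Prop_WD}) to identify $K$ with the corresponding set built for the problem $(P_e)$; the delicate point is that $f-g+\delta_A$ and $f-\eco g+\delta_A$ differ, so \eqref{eq:problemPe} is precisely the bridge making their relevant $\ep$-convex slices agree. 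Handling the degenerate cases $v(P)=-\infty$ (where all the sets in question are empty, hence trivially equal and $\ep$-convex) should be routine and dispatched at the outset.
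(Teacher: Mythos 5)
Your plan is correct and follows essentially the same route as the paper: the unconditional direct computation giving $\Omega'\subseteq K'$, the hypothesis \eqref{eq:problemPe} used only in (i)$\Rightarrow$(ii) to force $K'\subseteq\epi(f-g+\delta_A)^c\cap B$ by passing through $\eco g$ and the auxiliary problem $(P_e)$, and Propositions \ref{prop:SD_DL} and \ref{prop:Zero_DG2}\,(iii) to translate the resulting set equalities into the two strong dualities. One small caveat: Lemma \ref{lem:Lemma_aux_C}\,(ii) points in the wrong direction for establishing $K'\subseteq\Lambda\cap B$ — that inclusion instead follows from Lemma \ref{lem:Lemma_aux} combined with $\eco g=g^{cc^{\prime}}$ and $\lambda h\leq\delta_A$, which is in substance what the paper does by proving $v(P)=v(P_e)$ via Lemma \ref{lem:Lemma_FV23} and then invoking Proposition \ref{prop:WD_DL}.
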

\begin{proof}
The implication $ii) \Rightarrow i)$ is a direct consequence from Propositions \ref{prop:SD_DL} and \ref{prop:Zero_DG2}. In this implication, condition \eqref{eq:problemPe} is not necessary.\\
$i) \Rightarrow ii)$ First of all, recall the dual problems \eqref{eq:Primal_problem_prop} and \eqref{eq:Dual_problem_prop} defined in the proof of Proposition \ref{prop:Prop_WD}
\begin{equation}
	\label{eq:Primal_problem_prop}
	\tag{$P_e$}
	\inf_A \left\lbrace f(x)-\eco g(x)\right\rbrace
	\end{equation}
\begin{equation}
	\label{eq:Dual_problem_prop}
	\tag{$D_{e,L}$}
	\sup_{\RTD_+}\inf_{X} \left\{f(x)-\eco g(x)+\lambda h(x)\right\}.
	\end{equation}
In that proposition it was shown that $v(P_e) \geq v(\Db_L)$.\\
Now, we will prove that $v(P)=v(P_e)$. Since $\eco g\leq g$, then $v(P_e)\geq v(P)$. In case $v(P_e)>v(P)$, it is enough to consider any $\beta\in\R$ such that
\begin{equation*}
	v(P)<-\beta\leq v(P_e).
\end{equation*}
Applying Lemma \ref{lem:Lemma_FV23} to $(P_e)$, we obtain that $(0,0,\delta,\beta)\in\epi(f-\eco g+\delta_A)^c$, for certain $\delta>0$, and by Lemma \ref{lem:Lemma_aux_C} $i)$, $(0,0,\delta,\beta)\in\Lambda$. In virtue of equality \eqref{eq:problemPe}, $(0,0,\delta,\beta)\in\epi(f-g+\delta_A)^c$, so again, applying Lemma \ref{lem:Lemma_FV23} to $(P)$, $v(P)\geq-\beta$, which is a contradiction. Then, $v(P)=v(P_e)$. This means that weak duality holds for $(P)-(\Db_L)$. Equivalently, according to Propositions
\ref{prop:WD_DL} and \ref{prop:Zero_DG2},
\begin{equation*}
	\label{eq:WD_DL_Cont}
	K'\subseteq \epi(f-g+\delta_A)^c\cap B =\Omega'.
\end{equation*}
Finally, we will see that $\Omega'\subseteq K'$. Take any point $(0,0,\delta,\beta)\in\Omega'$. We shall prove that $(0,0,0,\beta)\in K$. By definition of the set $\Omega'$ in \eqref{eq:Set Omegapp}, there exists $\lambda \in \RTD_+$ such that for all $x \in \dom g$,
\begin{equation*}
	-g(x)+\beta \geq -(f+\lambda h)(x).
\end{equation*}
Hence, for all $(u^*, v^*, \gamma) \in W$,
\begin{equation*}
	g^c(u^*, v^*, \gamma)+\beta \geq (f+\lambda h)^c(u^*, v^*, \gamma),
\end{equation*}
and $(u^*, v^*, \gamma,g^c(u^*, v^*, \gamma)+\beta)\in \epi(f+\lambda h)^c$, in particular for all $(u^*, v^*, \gamma) \in \dom g^c$, concluding that
$(0,0,0,\beta)\in K$.
Hence, $K'= \epi(f-g+\delta_A)^c\cap B =\Omega'$ and, moreover, according to Proposition \ref{prop:SD_DL}, there exists strong duality for $(P)-\left (\Db_L \right )$.
\end{proof}

\section{Fenchel-Lagrange duality}
\label{sec:FL_duality}
Having studied characterizations for Fenchel and Lagrange weak and strong dualities, and also zero duality gap for DC primal optimization problems, one cannot forget the natural combination between Fenchel and Lagrange dual problems. This mixed model, which is called \textit{Fenchel-Lagrange} dual problem, was introduced by Bot and Wanka for a convex optimization primal problem in \cite{BW2002} by means of Fenchel conjugation scheme and making use of the pertubational approach. In \cite{FV2017} it was built an alternative Fenchel-Lagrange dual using the $c$-conjugation pattern, fulfilling weak duality by construction, which reads, in the DC optimization context, like this
\begin{equation*}
	\label{eq:FL_Dold}
	\tag{$D_{FL}$}
	\sup_{\substack{\lambda\in\RTD_+,\\x^*,y^*\in X^*,\\ \alpha_1+\alpha_2>0}} \left\{ -(f-g)^c(-x^*,-y^*,\alpha_1)-(\lambda h)^c(x^*,y^*,\alpha_2)\right\}.
\end{equation*}
Applying \cite[Sec.3]{FV2023} to the previous Fenchel-Lagrange dual problem, we can consider the following reformulation of \eqref{eq:FL_Dold},
  \begin{equation*}
	\label{eq:FL_Dold}
	\tag{$D_{FL}$}
	\sup_{\RTD_+, Z} \left\{ -(f-g)^c(-x^*,-y^*,\alpha)-(\lambda h)^c(x^*,y^*,\alpha)\right\},
\end{equation*}
where $Z:=X^* \times X^*\times \R_{++}$.
Moreover, as it happened for Fenchel and Lagrange dual problems, it is also possible to derive an alternative formulation for \eqref{eq:FL_Dold}, provided that the function $g$ is e-convex. Proceeding along the lines of \cite[Sec.3]{FV2023} and replacing the function $g$ by $g^{cc^{\prime}}$ in $(f-g)^c$, we get

\begin{equation}
	\label{eq:final_form_f_g}
	(f-g)^c(-x^*,-y^*,\alpha)
	=
	\sup_{\dom g^c} \left\{f^c(u^*-x^*,-y^*,\alpha)-g^c(u^*,v^*,\gamma)\right\}.
\end{equation}
Applying \eqref{eq:final_form_f_g} in \eqref{eq:FL_Dold} we obtain an equivalent refomulation, in case $g$ is e-convex,
\begin{equation}
	\label{eq:FL_Dnew}
	\tag{$\overline{D}_{FL}$}
	\sup_{\substack{\RTD_+,Z}}\biggl\{ \inf_{\dom {g^c}} \left\{ g^c(u^*,v^*,\gamma)-f^c(u^*-x^*,-y^*,\alpha)\right\} - (\lambda h)^c(x^*,y^*,\alpha)\biggr\}.
\end{equation}
Considering \eqref{eq:FL_Dnew} as a new Fenchel-Lagrange dual problem for \eqref{eq:Primal_problem} with $g$ just a convex, but not necessarily e-convex function, even weak duality might fail, as one can see in the following example.

\begin{example}
Let $T$ be a singleton and $f,g,h:X\to\Ramp$ defined as in Example \ref{ex:No_equivalence_L}
\begin{equation*}
	f(x)=\left\{
	\begin{aligned}
		x,~&~\text{if } x\geq 0,\\
		+\infty, ~&~ \text{otherwise;}
	\end{aligned}
	\right.
	\hspace{0.35cm}
	g(x)=\left\{
	\begin{aligned}
		x,~&~\text{if } x> 0,\\
		1,~&~\text{if } x=0,\\
		+\infty, ~&~ \text{otherwise;}
	\end{aligned}
	\right.
	\hspace{0.35cm}
	h(x)=\left\{
	\begin{aligned}
		-x,~&~\text{if } x\geq 0,\\
		+\infty, ~&~ \text{otherwise.}
	\end{aligned}
	\right.
\end{equation*}
We obtained $A=\left[0, +\infty \right]$ and $v\eqref{eq:Primal_problem}=-1$.\\ 
Moreover, $\dom g^c=\left\lbrace (u^*, v^*, \alpha)\in \R^3:u^* \leq 1, v^* \leq 0, \alpha >0 \right\rbrace$ and, for all $(u^*, v^*, \alpha)\in \dom g^c$,
\begin{equation*}
g^c(u^*,v^*,\gamma) = \left\{
	\begin{aligned}
		0,~&~\text{if } u^* \leq 1, v^* \leq 0, \gamma >0\\
		+\infty, ~&~ \text{otherwise.}
	\end{aligned}
	\right.
\end{equation*}
On the other hand, for $(x^*, y^*,\alpha) \in \R^3$ and  $u^* \in \R$ such that $(u^*, v^*, \gamma)\in \dom g^c$, we have
\begin{align*}
f^c(u^*-x^*,-y^*,\alpha) &= \sup_\R \left\lbrace c(x,(u^*-x^*, -y^*, \alpha))-f(x)\right\rbrace\\
&=\left\{
	\begin{aligned}
		\sup_{x\geq 0} \left\lbrace x(u^*-x^*-1)\right\rbrace,~&~\text{if } y^* \geq 0,	\alpha>0\\
		+\infty, ~&~ \text{otherwise,}
	\end{aligned}
	\right. \\
&=\left\{
	\begin{aligned}
		0,~&~\text{if } u^*-x^* \leq 1, y^* \geq 0,	\alpha>0\\
		+\infty, ~&~ \text{otherwise.}
	\end{aligned}
	\right.	
\end{align*}
\begin{align*}
(\lambda h)^c(x^*,y^*,\alpha) &= \sup_\R \left\lbrace c(x,(x^*, y^*, \alpha))-\lambda h(x)\right\rbrace\\
&=\left\{
	\begin{aligned}
		\sup_{x\geq 0} \left\lbrace x(x^*+\lambda)\right\rbrace,~&~\text{if } y^* \leq 0,	\alpha>0\\
		+\infty, ~&~ \text{otherwise,}
	\end{aligned}
	\right. \\
&=\left\{
	\begin{aligned}
		0,~&~\text{if } x^* \leq -\lambda, y^* \leq 0,	\alpha>0\\
		+\infty, ~&~ \text{otherwise.}
	\end{aligned}
	\right.	
\end{align*}
We conclude that, for all $(u^*, v^*, \gamma)\in \dom g^c$, the function
$$g^c(u^*,v^*,\gamma)-f^c(u^*-x^*,-y^*,\alpha)-(\lambda h)^c(x^*,y^*,\alpha)$$ is finite (not $-\infty$) if and only if $y^*=0, \alpha >0, x^* \leq -\lambda$ and $x^* \geq u^*-1$, in such a case, its value is zero. Then, if $(x^*, y^*,\alpha) \in \R^3$ and $\lambda \geq 0$
\begin{align*}
\inf_{\dom {g^c}} \left\{ g^c(u^*,v^*,\gamma)-f^c(u^*-x^*,-y^*,\alpha)\right\rbrace - (\lambda h)^c(x^*,y^*,\alpha)\\
=\left\{
	\begin{aligned}
		0,~&~\text{if } y^* = 0, \alpha>0, x^* \leq -\lambda,x^* \geq u^*-1,\\
		-\infty, ~&~ \text{otherwise,}
	\end{aligned}
	\right. \\
\end{align*}
and 
$$v\eqref{eq:Dual_problem_2}=\sup_{\substack{\R_+,Z}}\biggl\{ \inf_{\dom {g^c}} \left\{ g^c(u^*,v^*,\gamma)-f^c(u^*-x^*,-y^*,\alpha)\right\} - (\lambda h)^c(x^*,y^*,\alpha)\biggr\}=0.$$
\begin{flushright}
	$\square$
\end{flushright}
\end{example}

From this example it is clear that conditions ensuring both weak and strong dualities for the dual pair \eqref{eq:FL_Dnew}-\eqref{eq:Primal_problem} are needed, as it used to happen for Fenchel and Lagrange dualities in Section \ref{sec:Cond_WD} and \cite{FV2023}. Let us define the set
\begin{equation}
\label{eq:Set_Kpp_FL}
	\begin{aligned}
	K^{\prime\prime}:= \bigcup_{\RTD_+}~\bigcup_{\dom \lambda h} ~\bigcap_{\dom g^c} \biggl\{ & \epi\bigl(f-c(\cdot,(y^*,v^*,\alpha))\bigr)^c\\
	& -\Bigl(x^*,0,0,g^c(x^*,u^*,\gamma)-(\lambda h)^c(-y^*,-v^*,\alpha)\Bigr)\biggr\},
	\end{aligned}
\end{equation}
which will shortly allow us to derive analogous duality results not only for weak but also for strong duality between \eqref{eq:FL_Dnew}$-$\eqref{eq:Primal_problem}. As expected, this set turns out to be a combination of \cite[Eq.~(2)]{FV2023} and \eqref{eq:Set_Kp}. In addition, it is straightforward to derive the corresponding Fenchel-Lagrange version of Lemma \ref{lem:Lemma_aux} from this manuscript or \cite[Lem.~4.1~ii)]{FV2023}. To avoid unnecessary repetitions, we conclude this section presenting next result where we characterize zero duality gap together with weak and strong dualities for \eqref{eq:Primal_problem}$-$\eqref{eq:FL_Dnew}. We omit the proof since its arguments resemble the ones used in Section \ref{sec:Cond_WD} from this manuscript and in \cite[Sect.~4]{FV2023}.

\begin{theorem}
Let $A$, $K^{\prime\prime}$ and $B$ the sets defined in \eqref{eq:Set_A}, \eqref{eq:Set_Kpp_FL} and \eqref{eq:Set_B}, respectively. The following statements hold
\begin{itemize}
	\item[i)] Weak duality holds for \eqref{eq:Primal_problem}$-$\eqref{eq:FL_Dnew} if and only if 
	\begin{equation*}
		K^{\prime\prime}\cap B\subseteq \epi(f-g+\delta_A)^c\cap B.
	\end{equation*}
	\item[ii)] There exists zero duality gap for \eqref{eq:Primal_problem}$-$\eqref{eq:FL_Dnew} if and only if 
	\begin{equation*}
		\epco(K^{\prime\prime})\cap B = \epi(f-g+\delta_A)^c\cap B.
	\end{equation*}
	\item[iii)] Strong duality holds for \eqref{eq:Primal_problem}$-$\eqref{eq:FL_Dnew} if and only if 
	\begin{equation*}
		K^{\prime\prime}\cap B = \epi(f-g+\delta_A)^c\cap B.
	\end{equation*}
\end{itemize}
\end{theorem}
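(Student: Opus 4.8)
The plan is to mirror, almost verbatim, the machinery developed in Section \ref{sec:Cond_WD} for the pair \eqref{eq:Primal_problem}$-$\eqref{eq:Dual_problem_2}, with the set $K'$ replaced by $K''\cap B$. The first step is to establish the Fenchel-Lagrange analogue of Lemma \ref{lem:Lemma_aux}: namely, that $(0,0,0,\beta)\in K''$ if and only if there exist $\lambda\in\RTD_+$ and $(y^*,v^*,\alpha)\in\dom\lambda h$ such that, for all $(u^*,\cdot,\gamma)\in\dom g^c$,
\begin{equation*}
	g^c(x^*,u^*,\gamma)-\bigl(f-c(\cdot,(y^*,v^*,\alpha))\bigr)^c(x^*,0,0)-(\lambda h)^c(-y^*,-v^*,\alpha)\geq-\beta,
\end{equation*}
which, after unwinding the definition of the coupling $c$ and of $(f-g)^c$ via \eqref{eq:final_form_f_g}, is exactly the statement that $-\beta$ is dominated by the objective of \eqref{eq:FL_Dnew} at some feasible point. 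This is the point referred to in the excerpt as ``the corresponding Fenchel-Lagrange version of Lemma \ref{lem:Lemma_aux}''; its proof is the same algebraic manipulation of $c$-conjugates as in Lemma \ref{lem:Lemma_aux} and \cite[Lem.~4.1~ii)]{FV2023}, combined with the reformulation identity \eqref{eq:final_form_f_g}.

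Next I would record, exactly as in Lemma \ref{lem: kprime between}, the sandwich
\begin{equation*}
	\{(0,0,\delta,\beta):\delta>0,\ \beta>-v\eqref{eq:FL_Dnew}\}\subseteq K''\cap B\subseteq\{(0,0,\delta,\beta):\delta>0,\ \beta\geq-v\eqref{eq:FL_Dnew}\},
\end{equation*}
using the Fenchel-Lagrange Lemma together with the fact that intersecting with $B$ forces the first two coordinates to vanish and the third to be strictly positive (here one uses Remark \ref{R1} to pass freely between ``some $\delta>0$'' and ``all $\delta>0$''). Combined with Lemma \ref{lem:epi(f-g+dA}, which describes $\epi(f-g+\delta_A)^c\cap B$ as $\{(0,0,\delta,\beta):\delta>0,\ \beta\geq-v\eqref{eq:Primal_problem}\}$ and shows it is $\ep$-convex, part i) follows: weak duality $v\eqref{eq:Primal_problem}\geq v\eqref{eq:FL_Dnew}$ holds iff the left-hand sandwich endpoint of $K''\cap B$ sits inside $\epi(f-g+\delta_A)^c\cap B$, i.e.\ iff $K''\cap B\subseteq\epi(f-g+\delta_A)^c\cap B$; the argument by contradiction choosing $\beta$ with $v\eqref{eq:Primal_problem}<-\beta<v\eqref{eq:FL_Dnew}$ is the same as in Proposition \ref{prop:WD_DL}. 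Part ii) is then obtained by taking $\ep$-convex hulls: using the identity
\begin{equation*}
	\epco\{(0,0,\delta,\beta):\delta>0,\ \beta>-v\eqref{eq:Primal_problem}\}=\{(0,0,\delta,\beta):\delta>0,\ \beta\geq-v\eqref{eq:Primal_problem}\}
\end{equation*}
(established in the proof of Proposition \ref{prop:Zero_DG} via the $c'$-elementary representation of $\ep$-convex functions), zero duality gap is equivalent to $\epco(K''\cap B)=\epi(f-g+\delta_A)^c\cap B$, i.e.\ to $\epco(K'')\cap B=\epi(f-g+\delta_A)^c\cap B$ once one checks $\epco(K''\cap B)$ and $\epco(K'')\cap B$ agree on $B$ — which holds because $B$ itself is $\ep$-convex, so intersecting an $\ep$-convex hull with $B$ commutes appropriately. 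Part iii) is the conjunction of ii) with solvability of \eqref{eq:FL_Dnew}: if the sets are equal then $K''\cap B$ is $\ep$-convex, hence ii) gives zero gap, and Lemma \ref{lem:Lemma_FV23} places $(0,0,\delta,-v\eqref{eq:Primal_problem})$ in $\epi(f-g+\delta_A)^c\cap B=K''\cap B$, which via the Fenchel-Lagrange Lemma produces an optimal $\bar\lambda$ and $(\bar y^*,\bar v^*,\bar\alpha)$; conversely strong duality makes $K''\cap B$ coincide with the $\ep$-convex set $\{(0,0,\delta,\beta):\delta>0,\ \beta\geq-v\eqref{eq:Primal_problem}\}$. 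This is precisely the template of Propositions \ref{prop:Zero_DG} and \ref{prop:SD_DL}.

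The main obstacle is bookkeeping rather than conceptual: the set $K''$ carries an extra union over $\dom\lambda h$ and a shifted argument $(x^*,u^*,\gamma)$ in $g^c$ coming from the Fenchel split, so the Fenchel-Lagrange version of Lemma \ref{lem:Lemma_aux} must be stated and proved with care to ensure that, after intersecting with $B$, the ``dummy'' variables $(y^*,v^*,\alpha)$ and $u^*$ are exactly the ones being supremized/infimized in \eqref{eq:FL_Dnew}, and that the membership condition collapses to inequality \eqref{eq:Lemma_aux_WD_0}'s analogue cleanly. The one genuinely new verification is that $\epco(K''\cap B)=\epco(K'')\cap B$, i.e.\ that forming the $\ep$-convex hull commutes with intersecting by the $\ep$-convex ``slab'' $B$; this follows since $B=\bigcap_{(x,\alpha)}\epi c'(\cdot,x)-\alpha$ over a suitable $D$ (as in Lemma \ref{lem:epi(f-g+dA}), hence is $\ep$-convex, and $\epco$ is the intersection of all $\ep$-convex supersets, so $\epco(K''\cap B)\subseteq\epco(K'')\cap B$ trivially while the reverse uses that both hulls, restricted to $B$, are generated by the same $c'$-elementary functions that are constant in the first two $W$-coordinates. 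Everything else is a transcription of the already-proven results, which is why the authors omit the proof.
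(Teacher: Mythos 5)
Your overall strategy is the one the paper intends: the authors omit this proof precisely because it transcribes Section \ref{sec:Cond_WD}, and your Fenchel--Lagrange analogue of Lemma \ref{lem:Lemma_aux}, the sandwich of $K^{\prime\prime}\cap B$ between $\{(0,0,\delta,\beta):\delta>0,\ \beta>-v\eqref{eq:FL_Dnew}\}$ and $\{(0,0,\delta,\beta):\delta>0,\ \beta\geq-v\eqref{eq:FL_Dnew}\}$, and the treatment of parts i) and iii) follow that template faithfully (for iii) you can even bypass ii) altogether: the inclusion $\supseteq$ in the hypothesis puts $(0,0,\delta,-v\eqref{eq:Primal_problem})$ into $K^{\prime\prime}\cap B$ and hence, via your Lemma, produces an optimal $\bar\lambda$ and $(\bar y^*,\bar v^*,\bar\alpha)$, while $\subseteq$ gives weak duality by i)).

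The genuine gap is in part ii), exactly at the step you flag as ``the one genuinely new verification''. Your identity $\epco(K^{\prime\prime}\cap B)=\epco(K^{\prime\prime})\cap B$ is justified by asserting that $B$ is an $\ep$-convex set, and that is false: an $\ep$-convex set is the epigraph of a supremum of $c^\prime$-elementary functions, and such a supremum is either identically $-\infty$ (empty index set) or nowhere $-\infty$, whereas a function with epigraph $B=\{(0,0)\}\times\R_{++}\times\R$ would have to equal $-\infty$ on $\{(0,0)\}\times\R_{++}$ and $+\infty$ elsewhere; equivalently, any nonempty intersection $\bigcap_D\epi c^\prime(\cdot,x)-\alpha$ restricted to first two coordinates zero carries the lower bound $\beta\geq\sup_D(-\alpha)>-\infty$ (compare the computation in Lemma \ref{lem:epi(f-g+dA}), so no such intersection equals $B$. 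Moreover, even if $B$ were $\ep$-convex this would only yield the trivial inclusion $\epco(K^{\prime\prime}\cap B)\subseteq\epco(K^{\prime\prime})\cap B$; the reverse inclusion is the problematic one, since points of $K^{\prime\prime}$ lying outside $B$ restrict which $c^\prime$-elementary epigraphs contain $K^{\prime\prime}$ and can therefore enlarge $\epco(K^{\prime\prime})\cap B$ strictly beyond $\epco(K^{\prime\prime}\cap B)$. What both directions of ii) actually require is an upper bound on $\epco(K^{\prime\prime})\cap B$: one must exhibit, for every $\varepsilon>0$, a $c^\prime$-elementary function $c^\prime(\cdot,x)-\alpha$ with $\alpha\leq v\eqref{eq:FL_Dnew}+\varepsilon$ whose epigraph contains the \emph{whole} of $K^{\prime\prime}$, not merely $K^{\prime\prime}\cap B$ --- the Fenchel--Lagrange counterpart of the inclusion $K\supseteq\bigcup_{\RTD_+}\epi(f-\eco g+\lambda h)^c$ in Lemma \ref{lem:Lemma_aux_C}. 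That verification is absent from your proposal, and without it the argument for ii) (and your route to the zero-gap half of iii), which passes through ii)) does not close.
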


\section{Conclusions}
\label{sec:Conclusions}
In this paper we present two dual problems for a general DC primal problem with an arbitrary number of inequality convex constraints. Under the even convexity of just one of the functions in the objective, both dual problems turn out to be equivalent. While for one of them weak duality trivially holds, for the other we need extra conditions to ensure it when one of the involved functions is just convex but not necessarily e-convex. We develop characterizations not only for weak duality, but also for zero duality gap and strong duality for both dual pairs and we study the relation between both strong dualities. Finally, we adapt our results from this work and from \cite{FV2023} to, after a derivation of an alternative Fenchel-Lagrange dual problem to the one stated in \cite{FV2017}, study not only weak but also zero duality gap and strong duality between this new Fenchel-Lagrange dual problem and the primal DC problem \eqref{eq:Primal_problem}.

\section*{Disclosure statement}
The authors declare that they have no conflict of interest.

\section*{Funding}
Research partially supported by MICIIN of Spain, Grant AICO/2021/165. 

\bibliographystyle{plain}
\bibliography{biblio.bib}

\end{document}